\newcommand{\defeq}{\stackrel{\mathrm{def}}{=}} 
\newcommand{\eqdef}{\stackrel{\mathrm{def}}{=}} 
\newcommand{\diag}{\operatorname{diag}} 
\newcommand{\BlockDiag}{\operatorname{blockdiag}}
\newcommand{\Sam}{\hat{S}}
\def\dom{\mathop{\rm dom}}
\def\argmin{\mathop{\rm argmin}}
\def\R{\mathbb{R}}
\def\E{\mathbb{E}}
\def\P{\mathbb{P}}
\def\H{\mathcal{H}}
\def\Hf{\mathcal{H}_{\!F}}
\newcommand{\mA}{{\bf A}}
\newcommand{\mB}{{\bf B}}
\newcommand{\mD}{{\bf D}}
\newcommand{\mE}{{\bf E}}
\newcommand{\mG}{{\bf G}}
\newcommand{\mI}{{\bf I}}
\newcommand{\mM}{{\bf M}}
\newcommand{\mP}{{\bf P}}
\newcommand{\mU}{{\bf U}}
\newcommand{\mZ}{{\bf Z}}
\newenvironment{proof}{{\noindent \textbf{Proof:}}}{\hfill\rule{2mm}{2mm}\par}
\newtheorem{assumption}{Assumption}
\newtheorem{lemma}{Lemma}
\newtheorem{theorem}{Theorem}
\newtheorem{proposition}{Proposition}
\icmltitlerunning{Randomized Block Cubic Newton Method}
\begin{document}

\twocolumn[
\icmltitle{Randomized Block Cubic  Newton Method}




\begin{icmlauthorlist}
	\icmlauthor{Nikita Doikov}{hse}
	\icmlauthor{Peter Richt\'{a}rik}{kaust,ed,mipt}
\end{icmlauthorlist}

\icmlaffiliation{hse}{National Research University Higher School of Economics, Moscow, Russia}
\icmlaffiliation{kaust}{King Abdullah University of Science and Technology, Thuwal,  Saudi Arabia}
\icmlaffiliation{ed}{University of Edinburgh, Edinburgh,  United Kingdom}
\icmlaffiliation{mipt}{Moscow Institute of Physics and Technology, Dolgoprudny, Russia}

\icmlcorrespondingauthor{Nikita Doikov}{nikita.doikov@gmail.com}
\icmlcorrespondingauthor{Peter Richt\'{a}rik}{peter.richtarik@kaust.edu.sa, peter.richtarik@ed.ac.uk}

\icmlkeywords{randomized algorithms, convex optimization, block coordinate descent, Newton method, cubic regularization, empirical risk minimization}

\vskip 0.3in
]



\printAffiliationsAndNotice{}  

\begin{abstract}
	We study the problem of minimizing the sum of three convex functions---a differentiable, twice-differentiable and a non-smooth term---in a high dimensional setting. To this effect we propose and analyze  a randomized block  cubic Newton (RBCN) method, which in each iteration builds a model of the objective function formed as the sum of the {\em natural} models of its three components: a linear model with a quadratic regularizer for the differentiable term, a quadratic model with a cubic regularizer for the twice differentiable term, and perfect (proximal)  model for the nonsmooth term. Our method in each iteration minimizes the model over a random subset of  blocks of the search variable. RBCN is the first algorithm with these properties, generalizing several existing methods, matching the best known bounds in all special cases. We establish ${\cal O}(1/\epsilon)$, ${\cal O}(1/\sqrt{\epsilon})$ and ${\cal O}(\log (1/\epsilon))$ rates under different assumptions on the component functions. Lastly, we show numerically that our method outperforms the state of the art on a variety of machine learning problems, including cubically regularized least-squares, logistic regression with constraints, and Poisson regression.
\end{abstract}

\section{Introduction}

In this paper we develop an efficient randomized algorithm for solving an optimization problem of the form

\begin{equation}\label{eq:main-no-structure} \min_{x\in Q}  F(x) \eqdef g(x) +  \phi(x) + \psi(x) , \end{equation}
where $Q\subseteq \R^N$ is a closed convex set, and $g, \phi$ and $\psi$ are convex functions with different smoothness and structural properties. Our aim is to capitalize on these different properties in the design of our algorithm.  We assume that $g$ has Lipschitz gradient\footnote{Our assumption is bit more general than this; see Assumptions~\ref{AssumptionStrongConvexityF},~\ref{AssumptionSmoothnessA} for details.}, $\phi$ has Lipschitz Hessian, while  $\psi$ is allowed to be nonsmooth, albeit ``simple''.  

\subsection{Block Structure}
Moreover, we assume that the $N$ coordinates of $x$ are partitioned into $n$ blocks  of sizes $N_1,\dots, N_n$, with $\sum_i N_i = N$, and then write $x = (x_{(1)},\dots,x_{(n)})$,  where $x_{(i)} \in \R^{N_i}$. This block structure is typically dictated by the particular application considered. Once the block structure is fixed,  we further assume  that $\phi$ and $\psi$ are {\em block separable}. That is, 
$\phi(x) = \sum_{i=1}^n \phi_i(x_{(i)})$ and $\psi(x) =  \sum_{i=1}^n \psi_i(x_{(i)})$,  where $\phi_i$ are twice differentiable with Lipschitz Hessians, and $\psi_i$ are closed convex (and possibly nonsmooth) functions. 

Revealing this block structure, problem \eqref{eq:main-no-structure} takes the form
\begin{equation}\label{eq:main-structure} \min_{x\in Q}  F(x) \eqdef g(x) +  \sum_{i=1}^n \phi_i(x_{(i)}) +\sum_{i=1}^n   \psi_i(x_{(i)}) . \end{equation}
We are specifically interested in the case when $n$ is {\em big}, in which case it make sense to update a small number of the block in each iteration only. 

\subsection{Related Work}

There has been a substantial and growing volume of research related to second-order and block-coordinate optimization. In this part we  briefly mention some of   the  papers most relevant to the present work. 

A major leap  in  second-order optimization theory was made since the cubic Newton method was proposed by \citet{Griewank1981}  and  independently rediscovered by \citet{nesterov2006cubic}, who  also  provided  global complexity guarantees. 

Cubic regularization was equipped with  acceleration by \citet{nesterov2008accelerating}, adaptive stepsizes by \cite{cartis2011adaptive1, cartis2011adaptive2} and extended to a universal framework by \citet{grapiglia2017regularized}. The universal schemes can automatically adjust to the implicit smoothness level of the objective. Cubically regularized second-order schemes for solving systems of nonlinear equations were developed by \citet{nesterov2007modified} and randomized variants for stochastic optimization were considered by \citet{tripuraneni2017stochastic, ghadimi2017second, kohler2017sub, cartis2018global}.

Despite their attractive global iteration complexity guarantees, the weakness of second-order methods in general, and cubic Newton in particular,  is their high computational cost per iteration. This issue  remains the subject of active research. For successful theoretical results related to the approximation of the cubic step we refer to~\cite{agarwal2016finding} and~\cite{carmon2016gradient}.

At the same time, there are many successful attempts to use \textit{block coordinate} randomization to accelerate first-order~\cite{tseng2009coordinate, richtarik2014iteration, richtarik2016parallel} and second-order~\cite{qu2016sdna, mutny2018parallel} methods. 

In this work we are addressing the issue of combining block-coordinate randomization with cubic regularization, to get \textit{a second-order method with proven global complexity guarantees and with a low cost per iteration}.

A  powerful advance in convex optimization theory was the advent of  \textit{composite} or \textit{proximal} first-order methods (see \cite{nesterov2013gradient} as a modern reference). This technique has become available as an algorithmic tool in block coordinate setting as well~\cite{richtarik2014iteration, qu2016sdna}.  Our aim in this work is the development of a \textit{composite cubically regularized second-order method}.

\subsection{Contributions}

We propose a new randomized second-order proximal algorithm for solving convex optimization problems of the form~\eqref{eq:main-structure}. Our method, {\em Randomized Block Cubic Newton (RBCN)} (see Algorithm~\ref{MainAlgorithm}) treats the three functions appearing in \eqref{eq:main-no-structure} differently, according to their nature. 

Our method is a {\em randomized block method} because in each iteration we update a random subset of the $n$ blocks only. This facilitates faster convergence, and is suited to problems where $n$ is very large. Our method is {\em proximal} because we keep the functions $\psi_i$ in our model, which is minimized in each iteration, without any approximation. Our method is a {\em cubic Newton} method because we approximate each $\phi_i$ using a cubically-regularized second order model.  

We are not aware of {\em any method} that can solve \eqref{eq:main-structure} via using the most appropriate models of the three functions (quadratic with a constant Hessian for $g$, cubically regularized quadratic for $\phi$ and no model for $\psi$), not even in the case $n=1$.

Our approach generalizes several existing results:

\begin{itemize}
	\item In the case when $n=1$, $g=0$ and $\psi=0$, RBCN reduces to the cubically-regularized Newton method of \citet{nesterov2006cubic}. Even when  $n=1$,  RBCN can be seen as an extension of this method to  \textit{composite} optimization. For $n>1$, RBCN provides an extension of the algorithm in \citet{nesterov2006cubic} to the  \textit{randomized block coordinate} setting, popular for high-dimensional problems.
	
	\item In the special case when $\phi=0$ and $N_i=1$ for all $i$, RBCN specializes to  the stochastic Newton (SN) method of \citet{qu2016sdna}. Applied to the empirical risk minimization problem (see Section~\ref{sec:ERM}), our method has a dual interpretation (see Algorithm~\ref{DualAlgorithm}). In this case, our method reduces to the stochastic dual Newton ascent method (SDNA) also described in \cite{qu2016sdna}. Hence, RBCN can be seen as an extension of SN and SDNA to blocks of arbitrary sizes, and to the inclusion of the twice differentiable term $\phi$.

	\item  In the case when $\phi = 0$ and the simplest over approximation of $g$ is assumed: $0 \preceq \nabla^2 g(x) \preceq LI$, the composite block coordinate gradient method \citet{tseng2009coordinate} can be applied to solve~\eqref{eq:main-no-structure}. Our method extends this in two directions: we add twice-differentiable terms~$\phi$, and use a tighter model for $g$, using all global curvature information (if available).

\end{itemize}

We prove  high probability global convergence guarantees under several regimes, summarized next:

\begin{itemize}
	\item Under no additional assumptions on $g$, $\phi$ and $\psi$ beyond convexity (and either boundedness of $Q$, or boundedness of the level sets of $F$ on $Q$), we prove the rate \[{\cal O}\left(\frac{n}{\tau \epsilon}\right),\] where $\tau$ is the mini-batch size (see Theorem~\ref{TheoremConvexSublinear}).
	\item Under certain conditions combining the properties of $g$ with  the way the random blocks are sampled, formalized by the assumption $\beta>0$ (see \eqref{BetaDefinition} for the definition of $\beta$), we obtain the rate \[{\cal O}\left(\frac{n}{\tau  \max\{1,\beta\} \sqrt{\epsilon} }\right)\] (see Theorem~\ref{TheoremStronglyConvexBeta}). In the special case when $n=1$, we necessarily have $\tau=1$ and  $\beta=\mu/L$  (reciprocal of the condition number of $g$) we get the rate ${\cal O}(\tfrac{L}{\mu\sqrt{\epsilon}})$.
	If $g$ is quadratic and $\tau=n$, then $\beta=1$ and the resulting complexity ${\cal O}(\tfrac{1}{\sqrt{\epsilon}})$ recovers the  rate of cubic Newton established by \citet{nesterov2006cubic}.
	\item Finally, if $g$ is strongly convex, the above result can be improved (see Theorem~\ref{TheoremStronglyConvexMu}) to
	\[{\cal O}\left( \frac{n}{\tau \max\{1,\beta\}} \log \frac{1}{\epsilon}\right).\]

\end{itemize}

\subsection{Contents}

The rest of the paper is organized as follows. In Section~\ref{sec:block} we introduce the notation and elementary identities needed to efficiently handle the block structure of our model. In Section~\ref{sec:assumptions} we make the various smoothness and convexity assumptions on $g$ and $\phi_i$ formal. Section~\ref{sec:samplings} is devoted to the description of the block sampling
process used in our method, along with some useful identities. In Section~\ref{sec:algorithm} we describe formally our randomized block cubic Newton (RBCN) method. Section~\ref{sec:convergence} is devoted to the statement and description of our main convergence results, summarized in the introduction. Missing proofs are provided in the supplementary material. In Section~\ref{sec:ERM} we show how to apply our method to the empirical risk minimization problem. Applying RBCN to its dual leads to Algorithm~\ref{DualAlgorithm}. Finally, our numerical experiments on synthetic and real datasets are described in Section~\ref{sec:numerics}.

\section{Block Structure} \label{sec:block}

To model a block structure, we decompose the space $\R^N$ into $n$ subspaces in the following standard way.
Let $\mU \in \R^{N \times N}$ be a column permutation of the $N \times N$ identity matrix $\mI$ and let a decomposition $\mU = [\mU_1, \mU_2, \dots, \mU_n]$ be given, where $ \mU_i \in \R^{N \times N_i}$ are $n$ submatrices, $N = \sum_{i = 1}^n N_i$. Subsequently, any vector $x \in \R^N$ can be uniquely represented as $x = \sum_{i = 1}^n \mU_i x_{(i)}$, where $x_{(i)} \defeq \mU_i^T x \in \R^{N_i}$.

In what follows we will use the standard Euclidean inner product: $\langle x, y \rangle \defeq \sum_i x_i y_i$, Euclidean norm of a vector: $\|x\| \defeq \sqrt{\langle x, x \rangle}$ and induced spectral norm of a matrix: $\|\mA\| \defeq \max_{\|x\|=1}\|\mA x\|$. Using block decomposition, for two vectors $x, y \in \R^N$ we have:
$$
\langle x, y \rangle = \biggl\langle \sum_{i = 1}^n \mU_i x_{(i)}, \sum_{j = 1}^n \mU_j y_{(j)} \biggl\rangle = \sum_{i = 1}^n \langle x_{(i)}, y_{(i)} \rangle.
$$
For a given nonempty subset $S$ of $[n] \defeq \{1, \dots, n\}$ and for any vector $x \in \R^{N}$ we denote by $x_{[S]} \in \R^N$ the vector obtained from $x$ by retaining only blocks $x_{(i)}$ for which $i \in S$ and zeroing all other:
$$
x_{[S]} \defeq \sum_{i \in S} \mU_i x_{(i)} = \sum_{i \in S} \mU_i \mU_i^T x.
$$
Furthermore, for any matrix $\mA \in \R^{N \times N}$ we write $\mA_{[S]} \in \R^{N \times N}$ for the matrix obtained from $\mA$ by retaining only elements whose indices are both in some coordinate blocks from $S$, formally:
$$
\mA_{[S]} \defeq \Biggl(\sum_{i \in S} \mU_i \mU_i^T\Biggr) \mA \Biggl(\sum_{i \in S} \mU_i \mU_i^T\Biggr).
$$
Note that  these definitions imply that
$$
\langle \mA_{[S]}x, y \rangle = \langle \mA x_{[S]}, y_{[S]} \rangle, \quad  x, y \in \R^N.
$$
Next, we define the \textit{block-diagonal} operator, which, up to permutation of coordinates, retains  diagonal blocks and nullifies the off-diagonal blocks:
$$
\BlockDiag(\mA) \defeq \sum_{i = 1}^n \mU_i \mU_i^T \mA \mU_i \mU_i^T = \sum_{i = 1}^n \mA_{[\{i\}]}.
$$
Finally, denote  $\R^N_{[S]} \defeq \bigl\{ x_{[S]} \, | \, x \in \R^N \bigr\} $. This is a linear subspace of $\R^N$ composed of vectors which are zero in blocks $i\notin S$.

\section{Assumptions} \label{sec:assumptions}

In this section we  formulate our main assumptions about differentiable components of~\eqref{eq:main-structure} and provide some examples to illustrate the concepts.

We assume that $g: \R^N \to \R$ is a differentiable function and all $\phi_i: \R^{N_i} \to \R, \; i \in [n]$ are twice differentiable. Thus, at any point $x \in \R^N$ we should be able to compute all the gradients $\{ \nabla g(x), \nabla \phi_1 (x_{(1)}), \dots, \nabla \phi_n (x_{(n)}) \}$ and the Hessians $\{ \nabla^2 \phi_1 (x_{(1)}), \dots, \nabla^2 \phi_n(x_{(n)}) \}$, or at least their actions on arbitrary vector $h$ of appropriate dimension.

Next, we formalize our assumptions about convexity and level of smoothness. Speaking informally,  $g$ is similar to a quadratic, and functions  $\phi_i$ are arbitrary twice-differentiable and smooth.

\begin{assumption}[Convexity] \label{AssumptionStrongConvexityF}
	There is a positive semidefinite matrix $\mG \succeq 0$ such that for all $x, h \in \R^N$:
	\begin{align} \label{StrongConvexityF}
	g(x + h) &\geq g(x) + \langle \nabla g(x), h \rangle + \frac{1}{2}\langle \mG h, h \rangle, \\[3pt]
	\phi_i(x_{(i)} + h_{(i)}) &\geq \phi_i(x_{(i)}) + \langle \nabla \phi_i(x_{(i)}), h_{(i)} \rangle, \quad i \in [n]. \notag
	\end{align}
\end{assumption}

In the special case when  $\mG = 0$, \eqref{StrongConvexityF} postulates \textit{convexity}. For positive definite $\mG$, the objective will be \textit{strongly convex} with the strong convexity parameter $\mu \defeq \lambda_{\min}(\mG) > 0$.

Note that for all $\phi_i$ we  only require convexity. However, if  we happen to know that any  $\phi_i$ is strongly convex ($\lambda_{\min}(\nabla^2 \phi_i( y ) ) \geq \mu_i > 0$ for all $y \in \R^{N_i}$), we can \textit{move} this strong convexity to $g$ by subtracting $ \frac{\mu_i}{2}\|x_{(i)}\|^2$ from $\phi_i$ and adding it to $g$. This extra knowledge may in some particular cases improve convergence guarantees for our algorithm, but does not change the actual computations.

\begin{assumption}[Smoothness of $\boldsymbol{g}$] \label{AssumptionSmoothnessA} There is a positive semidefinite matrix $\mA \succeq 0$
	such that for all $x, h \in \R^N$:
	\begin{equation} \label{SmoothnessA}
	g(x + h) \leq g(x) + \langle \nabla g(x), h \rangle + \frac{1}{2} \langle \mA h, h \rangle.
	\end{equation}
\end{assumption}

The main example of $g$ is a quadratic function $g(x) = \frac{1}{2}\langle \mM x, x \rangle$ with a symmetric positive semidefinite $\mM \in \R^{N \times N}$ for which both~\eqref{StrongConvexityF} and~\eqref{SmoothnessA} hold with $\mG = \mA = \mM$. 

Of course, any convex $g$ with Lipschitz-continuous gradient with a constant $L \geq 0$ satisfies~\eqref{StrongConvexityF} and~\eqref{SmoothnessA} with $\mG = 0$ and $\mA = L \mI$~\cite{nesterov2004introductory}.

\begin{assumption}[Smoothness of $\boldsymbol{\phi_i}$] \label{AssumptionSmoothnessPhi} For every $i \in [n]$ there is a nonnegative constant $\H_i \geq 0$ such that the Hessian of $\phi_i$ is Lipschitz-continuous:
	\begin{equation} \label{SmoothnessPhi}
	\| \nabla^2 \phi_i(x + h) - \nabla^2 \phi_i(x) \| \leq \H_i \|h\|,
	\end{equation}
	for all $x, h \in \R^{N_i}$.
\end{assumption}

Examples of functions which satisfy~\eqref{SmoothnessPhi} with a known Lipschitz constant of Hessian $\H$ are \textit{quadratic}: $\phi(t) = \|Ct - t_0\|^2$ ($\H = 0$ for all the parameters), \textit{cubed norm}: $\phi(t) = (1/3)\|t - t_0\|^3$ ($\H = 2$, see Lemma~5 in \cite{nesterov2008accelerating}), \textit{logistic regression loss}: $\phi(t) = \log(1 + \exp(t))$ ($\H = 1 / (6 \sqrt{3})$, see Proposition~1 in the supplementary material).

For a fixed set of indices $S \subset [n]$ denote
$$\phi_S(x) \defeq \sum_{i \in S} \phi_i(x_{(i)}), \quad x \in \R^N.$$
Then we have:
\begin{align*}
\langle \nabla \phi_S(x), h \rangle &=  \sum_{i \in S} \langle \nabla \phi_i(x_{(i)}), h_{(i)} \rangle, \quad x, h \in \R^N, \\
\langle \nabla^2 \phi_S(x)h, h \rangle &=\sum_{i \in S} \langle \nabla^2 \phi_i(x_{(i)}) h_{(i)}, h_{(i)} \rangle, \quad x, h \in \R^N.
\end{align*}

\begin{lemma} \label{Lemma:LemmaPhiBound} 
	If  Assumption~\ref{AssumptionSmoothnessPhi} holds, then for all $x, h \in \R^N$ we have the following second-order approximation bound:
	\begin{align} \label{LemmaPhiBound} \notag
	\Bigl| \phi_S(x + h) - \phi_S(x) &- \langle \nabla \phi_S(x), h \rangle - \frac{1}{2}\langle \nabla^2 \phi_S(x)h, h \rangle  \Bigr| \\
	\leq& \quad \max_{i \in S}\{ \H_i \} \cdot \|h_{[S]}\|^3. 
	\end{align}
\end{lemma}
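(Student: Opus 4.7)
The plan is to reduce the statement to the standard single-function second-order Taylor remainder bound for each $\phi_i$, then assemble the pieces via the block-separability of $\phi_S$ and a simple norm inequality. Since $\phi_S$, $\nabla \phi_S$, and $\nabla^2 \phi_S$ all decompose as sums over $i \in S$ acting only on the block coordinates $x_{(i)}$ (as noted in the identities just preceding the lemma), and since $h_{[S]}$ zeros out the blocks outside $S$, the whole problem factors cleanly across blocks.

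First I would establish the per-block bound: for each $i \in S$, since $\phi_i$ has $\H_i$-Lipschitz Hessian by Assumption~\ref{AssumptionSmoothnessPhi}, the standard Taylor argument (writing $\phi_i(y+v) - \phi_i(y) - \langle \nabla \phi_i(y), v\rangle - \frac{1}{2}\langle \nabla^2\phi_i(y) v,v\rangle = \int_0^1 (1-t)\langle (\nabla^2\phi_i(y+tv) - \nabla^2\phi_i(y))v, v\rangle\, dt$ and taking absolute values, then using Cauchy--Schwarz and the Lipschitz estimate) yields
\[
\bigl|\phi_i(y+v) - \phi_i(y) - \langle\nabla\phi_i(y), v\rangle - \tfrac{1}{2}\langle\nabla^2\phi_i(y)v, v\rangle\bigr| \le \tfrac{\H_i}{6}\|v\|^3 \le \H_i\|v\|^3.
\]

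Next I would apply this bound with $y = x_{(i)}$ and $v = h_{(i)}$ for each $i\in S$. Using the block-separable identities for $\phi_S$, $\nabla\phi_S$ and $\nabla^2\phi_S$, the quantity inside the absolute value on the left-hand side of~\eqref{LemmaPhiBound} equals $\sum_{i \in S}\bigl[\phi_i(x_{(i)}+h_{(i)}) - \phi_i(x_{(i)}) - \langle\nabla\phi_i(x_{(i)}), h_{(i)}\rangle - \tfrac{1}{2}\langle\nabla^2\phi_i(x_{(i)}) h_{(i)}, h_{(i)}\rangle\bigr]$. Applying the triangle inequality and the per-block bound gives
\[
\text{LHS of \eqref{LemmaPhiBound}} \le \sum_{i \in S} \H_i\, \|h_{(i)}\|^3 \le \max_{i \in S}\{\H_i\}\, \sum_{i \in S}\|h_{(i)}\|^3.
\]

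Finally I would close the argument with the elementary norm inequality $\sum_{i\in S}\|h_{(i)}\|^3 = \sum_{i\in S}(\|h_{(i)}\|^2)^{3/2} \le \bigl(\sum_{i\in S}\|h_{(i)}\|^2\bigr)^{3/2}$ (valid for nonnegative summands since $t\mapsto t^{3/2}$ is superadditive on $\R_{\ge 0}$, equivalently $\|\cdot\|_{3/2} \ge \|\cdot\|_1$ when the entries share sign). By the orthogonality of the block decomposition, $\sum_{i\in S}\|h_{(i)}\|^2 = \|h_{[S]}\|^2$, so the right-hand side becomes $\max_{i\in S}\{\H_i\}\cdot \|h_{[S]}\|^3$, which is exactly~\eqref{LemmaPhiBound}. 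The only mildly delicate step is the norm inequality in this last paragraph, but it is entirely standard; the rest is bookkeeping about the block structure.
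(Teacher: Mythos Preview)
Your proof is correct and complete, but it follows a different route from the paper's. The paper proves that $\nabla^2\phi_S$ itself has a Lipschitz constant $\max_{i\in S}\{\H_i\}$ with respect to $\|h_{[S]}\|$ (via an operator-norm computation that exploits the block-diagonal structure of $\nabla^2\phi_S$), and then applies the standard second-order Taylor remainder bound once to the aggregate function. You instead apply the Taylor remainder bound to each $\phi_i$ separately, sum via the triangle inequality, and then close with the superadditivity inequality $\sum_{i\in S}\|h_{(i)}\|^3 \le \bigl(\sum_{i\in S}\|h_{(i)}\|^2\bigr)^{3/2}$. Both arguments are short and elementary; the paper's version trades your final norm inequality for an operator-norm calculation, so neither is strictly simpler. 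Your approach has the mild advantage of making the per-block constants $\H_i$ visible longer, while the paper's version makes clear that \eqref{LemmaPhiBound} is literally the standard Lipschitz-Hessian bound applied to $\phi_S$.

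One small correction: your parenthetical ``equivalently $\|\cdot\|_{3/2}\ge\|\cdot\|_1$'' has the inequality reversed (for $\ell_p$ norms one has $\|\cdot\|_{3/2}\le\|\cdot\|_1$, which is exactly what you need). The superadditivity statement you give just before it is correct, so this is only a typo in the aside and does not affect the argument.
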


From now on we denote $\displaystyle \Hf \defeq \max \{\H_1, \H_2, \dots, \H_n\}$. 

\section{Sampling of Blocks} \label{sec:samplings}

In this section we summarize some basic properties of  sampling $\hat{S}$, which is a random set-valued mapping with values being subsets of $[n]$.  For a fixed block-decomposition, with each sampling $\hat{S}$ we associate the {\em probability matrix} $\mP \in \R^{N \times N}$ as follows: an element of $\mP$ is the probability of choosing a pair of blocks which contains indices of this element. Denoting by $\mE \in \R^{N \times N}$ the matrix of all ones, we have $\mP = \E\bigl[ \mE_{[\hat{S}]} \bigr]$. Wel restrict our analysis to  {\em uniform} samplings, defined next.
\begin{assumption}[Uniform sampling] \label{AssumptionUniformSampling}
	Sampling $\hat{S}$ is uniform, i.e.,
	$\P(i \in \hat{S}) = \P(j \in \hat{S}) \eqdef p$, for all $i ,j \in [n]$.
\end{assumption}
The above assumpotion means that the diagonal of $\mP$ is constant: $\mP_{ii} = p$ for all $i \in [N]$. It is easy to see that  (Corollary 3.1 in \cite{ESO}):
\begin{equation} \label{eq:h9s8y8yhs}\E\bigl[ \mA_{[\hat{S}]}  \bigr] = \mA\circ \mP,\end{equation}
where $\circ$ denotes the Hadamard product. 

Denote $\tau \defeq \E[|\hat{S}|] = np$ (expected minibatch size).  The special uniform sampling defined by picking from all subsets of size $\tau$ uniformly at random is called \textit{$\tau$-nice sampling}. If $\hat{S}$ is $\tau$-nice, then  (see Lemma 4.3 in \cite{ESO}) \begin{equation}\label{eq:b9d898gdy8h9} \mP = \frac{\tau}{n}\left((1-\gamma)\BlockDiag(\mE) + \gamma \mE\right),\end{equation}
where $\gamma = (\tau-1)/(n-1)$. 

In particular, the above results in the following:
\begin{lemma} For the $\tau$-nice sampling $\hat{S}$, we have
	$$\E[ \mA_{[\hat{S}]}  ] = \frac{\tau}{n}\left(1 - \frac{\tau-1}{n-1}\right)\BlockDiag(\mA) + \frac{\tau(\tau-1)}{n(n-1)} \mA . $$
\end{lemma}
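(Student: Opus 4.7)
The plan is to simply combine the two identities already stated in the excerpt: equation \eqref{eq:h9s8y8yhs}, which says $\E[\mA_{[\hat{S}]}] = \mA \circ \mP$ for any uniform sampling, and equation \eqref{eq:b9d898gdy8h9}, which gives the explicit formula for the probability matrix $\mP$ in the $\tau$-nice case. Plugging the latter into the former and using bilinearity of the Hadamard product immediately reduces the problem to evaluating $\mA \circ \BlockDiag(\mE)$ and $\mA \circ \mE$.

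First I would observe the two elementary identities $\mA \circ \mE = \mA$, which is trivial since $\mE$ is the all-ones matrix, and $\mA \circ \BlockDiag(\mE) = \BlockDiag(\mA)$. The second identity follows from the definition of $\BlockDiag$: the matrix $\BlockDiag(\mE)$ is the indicator pattern of entries lying in a diagonal block, so Hadamard-multiplying $\mA$ by it simply zeroes the off-diagonal blocks, which is precisely $\BlockDiag(\mA)$. One could verify this explicitly by noting that $\BlockDiag(\mE) = \sum_i \mU_i \mU_i^T \mE \mU_i \mU_i^T$ and hence, by the distributivity of Hadamard product, $\mA \circ \BlockDiag(\mE) = \sum_i \mA_{[\{i\}]} = \BlockDiag(\mA)$.

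Then I would chain these together: starting from \eqref{eq:h9s8y8yhs}, substitute the expression for $\mP$ from \eqref{eq:b9d898gdy8h9}, distribute the Hadamard product over the sum, apply the two identities, and finally substitute $\gamma = (\tau-1)/(n-1)$ to obtain
\[
\E[\mA_{[\hat{S}]}] = \frac{\tau}{n}\bigl((1-\gamma)\BlockDiag(\mA) + \gamma \mA\bigr) = \frac{\tau}{n}\left(1 - \frac{\tau-1}{n-1}\right)\BlockDiag(\mA) + \frac{\tau(\tau-1)}{n(n-1)} \mA,
\]
which is the claim. There is no real obstacle here; the lemma is a direct corollary of the two cited facts, and the only content is recognizing that Hadamard-multiplying with $\BlockDiag(\mE)$ is the block-diagonal projection.
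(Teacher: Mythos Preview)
Your proposal is correct and is exactly the paper's approach: the paper's entire proof is the single sentence ``Combine \eqref{eq:h9s8y8yhs} and \eqref{eq:b9d898gdy8h9},'' and you have simply spelled out the two elementary Hadamard-product identities ($\mA\circ\mE=\mA$ and $\mA\circ\BlockDiag(\mE)=\BlockDiag(\mA)$) that make this combination explicit.
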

\begin{proof} Combine \eqref{eq:h9s8y8yhs} and \eqref{eq:b9d898gdy8h9}.
\end{proof}

\section{Algorithm} \label{sec:algorithm}

Due to the problem structure~\eqref{eq:main-structure} and utilizing the smoothness of the components (see \eqref{SmoothnessA} and \eqref{SmoothnessPhi}), for a fixed subset of indices $S \subset [n]$ it is natural to consider the following \textit{model} of our objective $F$ around a point $x \in \R^N$:
\begin{align}  
&M_{H, S}(x; y) \defeq F(x) + \langle (\nabla g(x))_{[S]}, y \rangle + \frac{1}{2}\langle \mA_{[S]} y, y \rangle + \notag \\ 
& +  \langle ( \nabla\phi(x))_{[S]}, y \rangle  + \frac{1}{2}\langle (\nabla^2 \phi(x))_{[S]} y, y \rangle + \frac{H}{6}\|y_{[S]}\|^3 + \notag \\
&+ \sum_{i \in S} \Bigl(  \psi_i(x_{(i)} + y_{(i)}) - \psi_i(x_{(i)})\Bigr). \label{CubicModelMain}
\end{align}
The above  model arises as a combination of a first-order model for $g$ with global curvature information provided by matrix $\mA$, second-order model with cubic regularization (following \cite{nesterov2006cubic}) for  $\phi$, and perfect model  for the non-differentiable terms $\psi_i$ (i.e., we keep these terms as they are).

Combining~\eqref{SmoothnessA} and~\eqref{LemmaPhiBound}, and  for large enough $H$ ($ H \geq \max_{i \in S} \H_i$ is sufficient), we get the global upper bound 
$$
F(x + y) \leq M_{H, S}(x; y), \quad x \in \R^N, \; y \in \R_{[S]}^N.
$$
Moreover, the value of all summands in $M_{H,S}(x; y)$ depends  on the subset of blocks $\{ y_{(i)} | i \in S \}$ only, and therefore 
\begin{equation} \label{DefinitionOfStep}
T_{H, S}(x) \defeq \argmin_{\substack{y \in \R_{[S]}^N \\[2pt] \text{subject to} \; x+y \in Q}} M_{H, S}(x; y)
\end{equation}
can be computed efficiently for small $|S|$ and as long as $Q$ is \textit{simple} (for example, affine). Denote the minimum of the cubic model by $M^{*}_{H, S}(x) \defeq M_{H,S}(x; T_{H,S}(x))$. The RBCN method performs the update $x \leftarrow x + T_{H, S}(x)$, and is  formalized as Algorithm~\ref{MainAlgorithm}.

\begin{algorithm}[ht!]
	\caption{RBCN: Randomized Block  Cubic Newton}
	\label{MainAlgorithm}
	\begin{algorithmic}[1]
		\STATE \textbf{Parameters:} sampling distribution $\Sam$
		\STATE \textbf{Initialization:} choose initial point $x^0 \in Q$
		\FOR {$k=0,1,2,\dots$}
		\STATE Sample $S_k \sim \Sam$
		\STATE Find $H_k \in (0,  \, 2 \Hf]$ such that \\$\qquad F(x^k + T_{H_k, S_k}(x^k)) \leq M^{*}_{H_k, S_k}(x^k)$
		\STATE Make the step $x^{k + 1} \eqdef x^k + T_{H_k, S_k}(x^k)$
		\ENDFOR
	\end{algorithmic}
\end{algorithm}

\section{Convergence Results} \label{sec:convergence}

In this section we establish several convergence rates for  Algorithm~\ref{MainAlgorithm} under various structural assumptions:  for the general class of convex problems, and for the more specific strongly convex case. We will focus on the family of \textit{uniform samplings} only, but generalizations to other sampling distributions are also possible.

\subsection{Convex Loss} We start from the general situation where the term $g(x)$ and all the $\phi_i(x_{(i)})$ and $\psi_i(x_{(i)}), i \in [n]$ are convex, but not necessary strongly convex.

Denote by $D$ the maximum distance from an optimum point $x^{*}$ to the initial level set:
$$D \defeq \sup \Bigl\{ \|x - x^{*}\| \; \big| \; x \in Q, \,  F(x) \leq F(x^0) \Bigr\}.$$

\begin{theorem} \label{TheoremConvexSublinear} Let Assumptions~\ref{AssumptionStrongConvexityF},~\ref{AssumptionSmoothnessA},~\ref{AssumptionSmoothnessPhi},~\ref{AssumptionUniformSampling} hold. Let solution $x^{*} \in Q$ of problem~\eqref{eq:main-no-structure} exist, and assume the level sets are bounded: $D < +\infty$. Choose required accuracy $\varepsilon > 0$ and confidence level $\rho \in (0, 1)$. Then after
	\begin{equation} \label{Theorem:ConvexSublinearRate}
	K \geq \frac{2}{\varepsilon} \frac{n}{\tau} \biggl(1 + \log\frac{1}{\rho} \biggr) \max\Bigl\{ LD^2 + \Hf D^3, \, F(x^{0}) - F^{*} \Bigr\}  
	\end{equation}
	iterations of Algorithm~\ref{MainAlgorithm}, where $L \defeq \lambda_{\max}(\mA)$, we have
	$$
	\P\Bigl( F(x^K) - F^{*} \leq \varepsilon \Bigr) \; \geq \; 1 - \rho.
	$$
\end{theorem}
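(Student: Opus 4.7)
The plan is to follow the standard block-coordinate descent template, adapted here to a cubic-regularized second-order model. The starting point is the guarantee built into Algorithm~\ref{MainAlgorithm}:
$$F(x^{k+1}) \leq M^*_{H_k, S_k}(x^k) \leq M_{H_k, S_k}(x^k;\, y), \qquad y \in \R^N_{[S_k]},\; x^k + y \in Q.$$
I would substitute the test vector $y = \alpha u$ with $u \defeq (x^* - x^k)_{[S_k]}$ and $\alpha \in [0,1]$; this stays feasible provided $Q$ is block-decomposable, which is the natural setting for a block-coordinate scheme. The chief nuisance in bounding $M_{H_k, S_k}(x^k;\,\alpha u)$ is the model's $\tfrac{\alpha^2}{2}\langle (\nabla^2\phi(x^k))_{[S_k]} u, u\rangle$, since no assumption directly controls $\nabla^2\phi$. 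I would absorb it via Lemma~\ref{Lemma:LemmaPhiBound} into the gap $\phi_{S_k}(x^k + \alpha u) - \phi_{S_k}(x^k) - \alpha\langle \nabla\phi_{S_k}(x^k), u\rangle$ plus a cubic remainder of order $\Hf \alpha^3\|u\|^3$. Convexity of each $\phi_i$ then contracts $\phi_{S_k}(x^k + \alpha u) - \phi_{S_k}(x^k) \leq \alpha(\phi_{S_k}(x^*) - \phi_{S_k}(x^k))$; combining this with convexity of $\psi_i$ (to linearize the proximal terms) and $\langle \mA u, u\rangle \leq L\|u\|^2$ from Assumption~\ref{AssumptionSmoothnessA} collapses the model into a linear-in-$\alpha$ value-gap toward $F(x^*)$ plus controlled quadratic and cubic residuals in $\alpha$.

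Next I would take expectation over the uniform sampling $S_k$ with $p \defeq \P(i \in S_k) = \tau/n$. All block-restricted linear terms each pick up a factor $p$, collapsing to the convexity gap $\alpha p(F(x^*) - F(x^k))$. The $\mA$-quadratic uses $\E[\|u_{[S_k]}\|^2] = p\|x^* - x^k\|^2 \leq pD^2$; for the cubic the crude but tight enough bound
$$\E[\|u_{[S_k]}\|^3] \leq D\cdot \E[\|u_{[S_k]}\|^2] \leq pD^3$$
suffices. With $H_k \leq 2\Hf$ and $r^k \defeq F(x^k) - F^*$, these ingredients produce a recursion
$$\E[r^{k+1}\mid x^k] \leq (1 - \alpha p)\,r^k + c\,\alpha^2\,p\,(LD^2 + \Hf D^3), \qquad \alpha \in [0,1],$$
for an absolute constant $c$. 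Optimizing $\alpha$ in the two natural regimes (take $\alpha$ of order unity when $r^k \gtrsim \Omega \defeq LD^2 + \Hf D^3$ for geometric decay at rate $\Theta(p)$; take $\alpha \propto r^k/\Omega$ otherwise for the Chebyshev-style decrement $\E[r^{k+1}|x^k] \leq r^k - \Theta(p)(r^k)^2/\Omega$) and telescoping the inverses of the expectations in the sub-linear phase yields $\E[r^K] = O(n\Omega/(\tau K))$, i.e., the $\varepsilon^{-1}$ rate.

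Finally, the expectation bound has to be upgraded to the high-probability statement with the stated $(1 + \log(1/\rho))$ factor — raw Markov would give only $1/\rho$. The key is that Algorithm~\ref{MainAlgorithm}'s step-acceptance rule $H_k \in (0, 2\Hf]$ enforces $F(x^{k+1}) \leq F(x^k)$ deterministically, so $(r^k)$ is monotone; combining this with the one-step decrement via a standard supermartingale/restart argument (in the spirit of Theorem~1 of \cite{richtarik2014iteration}) produces the tail bound with the correct logarithmic dependence on $1/\rho$. The main technical obstacle I anticipate is the Lemma~\ref{Lemma:LemmaPhiBound}-based absorption of the unwieldy $\nabla^2\phi$ quadratic into a cubic remainder while preserving the numerical constants that make the choice $H_k \leq 2\Hf$ suffice; a secondary point is admissibility of the test direction, which implicitly requires $Q$ to be block-decomposable.
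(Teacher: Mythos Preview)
Your proposal is correct and follows essentially the same route as the paper: substitute the test direction $\alpha(x^*-x^k)_{[S_k]}$ into the model bound, use Lemma~\ref{Lemma:LemmaPhiBound} to trade the uncontrolled $\nabla^2\phi$ quadratic for a cubic remainder, apply convexity of $\phi_i,\psi_i$ and Assumption~\ref{AssumptionStrongConvexityF} for $g$, take expectation over the uniform sampling (picking up the $\tau/n$ factors exactly as you describe), optimize over $\alpha\in[0,1]$ to get $\E[\xi_{k+1}\mid\xi_k]\le \xi_k-\tfrac{\tau}{2nD_0}\xi_k^2$, and finish with the high-probability lemma \`a la \cite{richtarik2014iteration}. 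The paper packages the first four steps into an auxiliary one-step lemma (your recursion with $c=1/2$) and the last step into a separate technical lemma with $p=2$, but the argument is the same; your observation that feasibility of the block-restricted test point tacitly requires $Q$ to be block-separable is a genuine subtlety the paper glosses over.
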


\medskip

Given theoretical result provides global sublinear rate of convergence, with iteration complexity of the order $O\bigl( 1 / \varepsilon \bigr)$. 

Note that for a case $\phi(x) \equiv 0$ we can put $\Hf = 0$, and Theorem~\ref{TheoremConvexSublinear} in this situation restates well-known result about convergence of composite gradient-type block-coordinate methods (see, for example, \cite{richtarik2014iteration}).

\subsection{Strongly Convex Loss} Here we study the case when the matrix $\mG$ from the convexity assumption~\eqref{StrongConvexityF} is strictly positive definite: $\mG \succ 0$, which means that the objective $F$ is \textit{strongly convex} with a constant $\mu~\defeq~\lambda_{\min}(\mG)~>~0$. 

Denote by $\beta$ a \textit{condition number} for the function $g$ and sampling distribution $\hat{S}$: the maximum nonnegative real number such that
\begin{equation} \label{BetaDefinition}
\beta \cdot \E_{S \sim \hat{S}}\bigl[ \mA_{[S]} \bigr] \; \preceq \; \frac{\tau}{n} \mG .
\end{equation}
If~\eqref{BetaDefinition} holds for \textit{all} nonnegative $\beta$ we put by definition  $\beta \equiv +\infty$. 

A simple lower bound exists: $\beta \geq \frac{\mu}{L} > 0$, where $L = \lambda_{\max}(\mA)$, as in Theorem~\ref{TheoremConvexSublinear}.
However, because~\eqref{BetaDefinition} depends not only on $g$, but also on sampling distribution $\hat{S}$, it is possible that $\beta > 0$ even if $\mu = 0$ (for example, $\beta = 1$ if $\P(S = [n]) = 1$ and $\mA = \mG \not= 0$).

The following theorems describe global iteration complexity guarantees of the order $O\bigl(1 / \sqrt{\varepsilon}\bigr)$ and $O\bigl( \log(1 / \varepsilon) \bigr)$ for Algorithm~\ref{MainAlgorithm} in the cases $\beta > 0$ and $\mu > 0$ correspondingly, which is an improvement of general $O\bigl(1 / \varepsilon\bigr)$.

\begin{theorem} \label{TheoremStronglyConvexBeta}
	Let Assumptions~\ref{AssumptionStrongConvexityF},~\ref{AssumptionSmoothnessA},~\ref{AssumptionSmoothnessPhi},~\ref{AssumptionUniformSampling} hold. Let solution $x^{*} \in Q$ of problem~\eqref{eq:main-no-structure} exist, let level sets be bounded: $D < +\infty$, and 
	assume that $\beta$, which is defined by~\eqref{BetaDefinition}, is greater than zero. 
	Choose required accuracy $\varepsilon > 0$ and confidence level $\rho \in (0, 1)$. Then after
	\begin{equation}
	K \geq \frac{2}{\sqrt{\varepsilon}} \frac{n}{\tau} \frac{1}{\sigma} \biggl(2 + \log\frac{1}{\rho} \biggr)   \sqrt{\max\Bigl\{ \Hf D^3, \, F(x^{0}) - F^{*} \Bigr\}} \label{Theorem:StronglyConvexSublinearRate}
	\end{equation}
	iterations of Algorithm~\ref{MainAlgorithm}, where $\sigma \defeq \min\{\beta, 1\} > 0$, we have
	$$
	\P\Bigl( F(x^K) - F^{*} \leq \varepsilon \Bigr) \; \geq \; 1 - \rho.
	$$
\end{theorem}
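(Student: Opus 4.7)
The plan is to adapt the classical cubic Newton analysis of \citet{nesterov2006cubic} to the block-randomized composite setting, using the $\beta$-condition from~\eqref{BetaDefinition} to convert the $\mA$-quadratic term from $g$'s upper model into curvature that is dominated by the strong-convexity budget of the lower model. A separate contribution of the $\phi$-Hessian term will be canceled by the Hessian-Lipschitz bound of Lemma~\ref{Lemma:LemmaPhiBound}. After establishing the per-iteration expected decrease, the remaining work is purely algebraic: solve a scalar recursion with fractional power, then amplify a Markov-style in-expectation bound into the high-probability statement.

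First, I would fix $x=x^k$ and $S=S_k$ and use the minimality of $T_{H,S}(x)$ together with the acceptance rule of the algorithm to get $F(x^{k+1}) \le M^{*}_{H_k,S_k}(x^k) \le M_{H_k,S_k}(x^k; \alpha (x^*-x)_{[S_k]})$ for any $\alpha \in [0,1]$. Taking conditional expectation over $S_k \sim \hat S$ and using uniformity of $\hat S$ (so that $\E[(x^*-x)_{[S]}] = \tfrac{\tau}{n}(x^*-x)$, $\E[(\nabla g(x))_{[S]}] = \tfrac{\tau}{n}\nabla g(x)$, block separability of $\phi$ and $\psi$, and the identity $\E[\mA_{[S]}] = \mA\circ \mP$ from~\eqref{eq:h9s8y8yhs}), I obtain, writing $u\defeq x^*-x$,
\begin{align*}
\E[F(x^{k+1})]-F(x) \le{}& \tfrac{\tau}{n}\alpha\bigl(\langle \nabla g(x)+\nabla\phi(x), u\rangle+\psi(x^*)-\psi(x)\bigr) \\
&+ \tfrac{\alpha^2}{2}\langle \E[\mA_{[S]}]u,u\rangle+\tfrac{\tau}{n}\tfrac{\alpha^2}{2}\langle\nabla^2\phi(x)u,u\rangle \\
&+ \tfrac{2\Hf\alpha^3}{6}\E\bigl[\|u_{[S]}\|^3\bigr],
\end{align*}
where I have bounded convex differences of the $\psi_i$ by linearization, used $H_k\le 2\Hf$ for the cubic term, and used $\|u_{[S]}\|\le\|u\|\le D$ to keep the cubic tractable.

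Next, I apply the strong-convexity inequalities of Assumption~\ref{AssumptionStrongConvexityF} to $g$ and to each $\phi_i$ to rewrite the linear term $\langle\nabla g(x)+\nabla\phi(x), u\rangle$ as $\le g(x^*)+\phi(x^*)-g(x)-\phi(x)-\tfrac12\langle\mG u, u\rangle$. Then I invoke~\eqref{BetaDefinition} as $\langle \E[\mA_{[S]}]u,u\rangle \le \tfrac{\tau}{n\beta}\langle\mG u,u\rangle$, and, for the $\phi$-Hessian, use Lemma~\ref{Lemma:LemmaPhiBound} (applied to each full $\phi_i$ with direction $u$) to pay for $\tfrac12\langle\nabla^2\phi(x)u,u\rangle$ by $\phi(x^*)-\phi(x)-\langle\nabla\phi(x),u\rangle+\Hf\|u\|^3$. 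Combining, with $\sigma\defeq\min\{\beta,1\}$ and $r_k\defeq F(x^k)-F^*$, and choosing $\alpha=\sigma\cdot\min\{1,\sqrt{r_k/R}\}$ with $R\defeq\max\{\Hf D^3, r_0\}$, the second-order cross term cancels and the recursion becomes
\begin{equation*}
\E[r_{k+1}\mid x^k] \;\le\; r_k - c\,\tfrac{\tau}{n}\,\sigma\,\frac{r_k^{3/2}}{\sqrt{R}}
\end{equation*}
for an absolute constant $c>0$, together with the deterministic monotonicity $r_{k+1}\le r_k$ guaranteed by the acceptance rule.

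Finally, I would solve this scalar recursion by the standard inductive argument on $1/\sqrt{\E[r_k]}$, which yields $\E[r_K]\le 4R/\bigl(c(\tau/n)\sigma K\bigr)^2$, and then convert the expectation bound to the claimed high-probability bound by the Markov-plus-restart amplification argument used to prove Theorem~\ref{TheoremConvexSublinear}, which accounts for the factor $2+\log(1/\rho)$. The main obstacle I expect is bookkeeping the interaction between three sources of curvature (the $\mA$-quadratic, the $\phi$-Hessian, and the cubic residual) in a way that simultaneously (i) absorbs $\alpha^2\langle\E[\mA_{[S]}]u,u\rangle$ into $\mG$ via $\beta$, (ii) absorbs the $\phi$-Hessian into $\phi(x^*)-\phi(x)$ at the price of only a $O(\Hf\|u\|^3)$ residual, and (iii) leaves an inequality whose leading coefficient is $\sigma$ rather than $\min\{\beta,\text{condition number of }\phi\}$ — verifying that case (ii) does not require any strong convexity of $\phi$ is the delicate step, and is precisely where Lemma~\ref{Lemma:LemmaPhiBound} is indispensable.
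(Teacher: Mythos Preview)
There is a genuine gap in your handling of the $\phi$-Hessian term. After you arrive at
\[
\E[F(x^{k+1})]-F(x)\;\le\;\tfrac{\tau}{n}\alpha\bigl(\langle\nabla g(x)+\nabla\phi(x),u\rangle+\psi(x^*)-\psi(x)\bigr)+\tfrac{\alpha^2}{2}\langle\E[\mA_{[S]}]u,u\rangle+\tfrac{\tau}{n}\tfrac{\alpha^2}{2}\langle\nabla^2\phi(x)u,u\rangle+O(\alpha^3),
\]
you bound the linear $\phi$-part via convexity, $\langle\nabla\phi(x),u\rangle\le\phi(x^*)-\phi(x)$, and then \emph{separately} bound the quadratic $\phi$-part by Lemma~\ref{Lemma:LemmaPhiBound} with direction $u$, obtaining $\tfrac12\langle\nabla^2\phi(x)u,u\rangle\le\phi(x^*)-\phi(x)-\langle\nabla\phi(x),u\rangle+\Hf\|u\|^3$. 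The right-hand side is the Bregman divergence $D_{\phi}(x^*,x)\ge 0$ plus a cubic. Since $\phi$ is only assumed convex with Lipschitz Hessian (no Lipschitz gradient), there is no control of $D_{\phi}(x^*,x)$ in terms of $r_k$, $D$, or $\Hf$. The term $\tfrac{\tau}{n}\alpha^2 D_{\phi}(x^*,x)$ therefore survives uncanceled, and the recursion $\E[r_{k+1}\mid x^k]\le r_k-c\,\tfrac{\tau}{n}\sigma\,r_k^{3/2}/\sqrt{R}$ does not follow. (Note also that applying the lemma with direction $u$ rather than $\alpha u$ produces an $\alpha^2\Hf\|u\|^3$ residual, not the $\alpha^3$ you need for the $1/\sqrt{\varepsilon}$ rate.)

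The paper avoids this by a different order of operations: \emph{before} separating linear and quadratic parts or taking expectation, it applies Lemma~\ref{Lemma:LemmaPhiBound} to the full second-order Taylor block of $\phi_{S}$ with increment $y=\alpha(x^*-x)_{[S]}$, reconstituting $\phi_{S}(x+y)-\phi_{S}(x)$ at the price of a cubic that is genuinely $O(\alpha^3)$. Only then does it take expectation and use convexity along the segment, $\phi_i(x_{(i)}+\alpha u_{(i)})-\phi_i(x_{(i)})\le\alpha(\phi_i(x^*_{(i)})-\phi_i(x_{(i)}))$. Keeping the linear and quadratic $\phi$-pieces tied together in this way prevents any Bregman residual from appearing; the net $\phi$-contribution is $\tfrac{\tau}{n}\alpha(\phi(x^*)-\phi(x))$ plus an $\alpha^3$ cubic. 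After that, the $\mA$-quadratic is absorbed into $\mG$ via~\eqref{BetaDefinition} for $\alpha\in[0,\sigma]$ exactly as you propose, giving the one-step bound, and the $p=3/2$ case of the random-sequence lemma (Lemma~\ref{LemmaRandomSequenceConvergence}) yields the high-probability conclusion. Your final stage (induction on $1/\sqrt{\E[r_k]}$ and Markov amplification) is essentially that lemma and is fine; only step~(ii) needs the repair above.
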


\begin{theorem} \label{TheoremStronglyConvexMu}
	Let Assumptions~\ref{AssumptionStrongConvexityF},~\ref{AssumptionSmoothnessA},~\ref{AssumptionSmoothnessPhi},~\ref{AssumptionUniformSampling} hold. Let solution $x^{*} \in Q$ of problem~\eqref{eq:main-no-structure} exist, and assume that $\mu \defeq \lambda_{\min}(\mG)$ is strictly positive. Then after
	\begin{equation} \label{Theorem:StronglyConvexLinearRate}
	K \geq \frac{3}{2}\log\biggl( \frac{F(x^0) - F^{*}}{\varepsilon \rho} \biggr) \frac{n}{\tau} \frac{1}{\sigma} \sqrt{\max\biggl\{ \frac{\Hf D}{\mu}, 1 \biggr\}} 
	\end{equation}
	iterations of Algorithm~\ref{MainAlgorithm}, we have
	$$
	\P\Bigl( F(x^K) - F^{*} \leq \varepsilon \Bigr) \; \geq \; 1 - \rho.
	$$
\end{theorem}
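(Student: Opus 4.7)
The plan is to promote the sublinear bound of Theorem~\ref{TheoremStronglyConvexBeta} to a linear one by using strong convexity to absorb the cubic residual. The skeleton has three steps: (i) derive a per-iteration multiplicative contraction $\E[F(x^{k+1})-F^*] \leq \theta(F(x^k)-F^*)$ with some explicit $\theta<1$, (ii) chain it to get $\E[F(x^K)-F^*] \leq \theta^K(F(x^0)-F^*)$, and (iii) apply Markov's inequality to convert the expected bound into the desired high-probability one, which produces the $\log(1/\rho)$ factor in~\eqref{Theorem:StronglyConvexLinearRate}. Note that $\mu>0$ already implies $\beta \geq \mu/L > 0$, so the $\beta$-inequality~\eqref{BetaDefinition} used in Theorem~\ref{TheoremStronglyConvexBeta} is at our disposal.

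For step (i) I would reuse the first half of Theorem~\ref{TheoremStronglyConvexBeta}'s argument. By the acceptance rule for $H_k$ we have $F(x^{k+1}) \leq M^*_{H_k,S_k}(x^k)$; bounding the minimum model value by plugging in the test direction $y=\alpha(x^*-x^k)_{[S_k]}$ for any $\alpha\in[0,1]$ and taking expectation over uniform $\hat{S}$ yields
\[
\E[F(x^{k+1})-F^*] \leq \Bigl(1 - \tfrac{\tau}{n}\alpha + \tfrac{\tau\alpha^2}{n\beta}\Bigr)(F(x^k)-F^*) + \tfrac{\tau H_k \alpha^3}{6n}\,\E\bigl[\|(x^*-x^k)_{[\hat{S}]}\|^3\bigr],
\]
where convexity of $F$ controls the linear term and~\eqref{BetaDefinition} combined with $\mG$-strong convexity handles the quadratic one. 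The new ingredient for Theorem~\ref{TheoremStronglyConvexMu} is to eliminate the cubic residual: using $\tfrac{\mu}{2}\|x^k-x^*\|^2 \leq F(x^k)-F^*$ together with $\|x^k-x^*\| \leq D$ gives $\|x^k-x^*\|^3 \leq \tfrac{2D}{\mu}(F(x^k)-F^*)$, collapsing the recurrence into
\[
\E[F(x^{k+1})-F^*] \leq \Bigl(1 - \tfrac{\tau}{n}\bigl[\alpha - \tfrac{\alpha^2}{\beta} - \tfrac{\Hf D}{3\mu}\alpha^3\bigr]\Bigr)(F(x^k)-F^*).
\]

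Choosing $\alpha = \min\{1,\beta/2,\sqrt{3\mu/(2\Hf D)}\}$ balances the three terms and produces a contraction rate of order $\tfrac{\tau}{n}\cdot\sigma\cdot\min\{1,\sqrt{\mu/(\Hf D)}\}$. Inverting this rate and using $-\log(1-c)\geq c$ recovers the prefactor $\tfrac{n}{\tau\sigma}\sqrt{\max(\Hf D/\mu,1)}$ in~\eqref{Theorem:StronglyConvexLinearRate}; enforcing $\theta^K(F(x^0)-F^*) \leq \varepsilon\rho$ and applying Markov yields the $\log\bigl((F(x^0)-F^*)/(\varepsilon\rho)\bigr)$ factor. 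The main technical obstacle, as in Theorem~\ref{TheoremStronglyConvexBeta}, is the cubic moment $\E[\|(x^*-x^k)_{[\hat{S}]}\|^3]$ under block sampling, which does not factor cleanly across blocks; I would bound it by the deterministic inequality $\|z_{[S]}\|^3 \leq \|z\|^3$, harvesting the $\tau/n$ acceptance factor from outside. Everything else is routine, and $H_k \leq 2\Hf$ from the line search lets us substitute $H=\Hf$ (up to a harmless factor of two) into the analysis.
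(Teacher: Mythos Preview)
Your three-step skeleton (multiplicative contraction, chaining, Markov) is exactly the paper's route, and the key new ingredient you identify---using $\tfrac{\mu}{2}\|x^k-x^*\|^2 \leq F(x^k)-F^*$ together with $\|x^k-x^*\|\leq D$ to convert the cubic residual $\|x^k-x^*\|^3$ into $\tfrac{2D}{\mu}\xi_k$---is precisely what the paper does. One cosmetic difference: the paper does not keep a $\tfrac{\tau\alpha^2}{n\beta}\xi_k$ term. Instead it restricts $\alpha\in[0,\sigma]$ from the outset and observes that then the whole quadratic remainder $-\tfrac{\alpha}{2}\bigl\langle(\tfrac{\tau}{n}\mG-\alpha\E[\mA_{[S]}])(x^k-x^*),\,x^k-x^*\bigr\rangle$ is nonpositive by~\eqref{BetaDefinition} and can be dropped. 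This yields the cleaner recurrence $\E[\xi_{k+1}\mid\xi_k]\leq\bigl(1-\tfrac{\alpha\tau}{n}+\tfrac{\alpha^3\tau}{n}\tfrac{\Hf D}{\mu}\bigr)\xi_k$ for $\alpha\in[0,\sigma]$, minimized at $\alpha^*=\sigma\min\{\sqrt{\mu/(3\Hf D)},1\}$. Your version with the extra $\alpha^2/\beta$ term is also valid (since $\mG$-strong convexity gives $\tfrac{1}{2}\langle\mG(x^k-x^*),x^k-x^*\rangle\leq\xi_k$), just a little less tidy.

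There is, however, a genuine slip in your handling of the cubic moment. Your first display already carries a factor $\tau/n$ in front of $\E\bigl[\|(x^*-x^k)_{[\hat S]}\|^3\bigr]$, yet you then propose to bound this expectation by the deterministic inequality $\|z_{[S]}\|^3\leq\|z\|^3$, which contributes no $\tau/n$ whatsoever---so the $\tau/n$ in your display is unearned. With only the deterministic bound the cubic term is $\tfrac{H_k\alpha^3}{6}\|x^k-x^*\|^3$ (no $\tau/n$), and optimizing the resulting recurrence over $\alpha$ gives a contraction of order $(\tau/n)^{3/2}$ rather than $\tau/n$, i.e.\ a final complexity worse by a factor $(n/\tau)^{1/2}$ than~\eqref{Theorem:StronglyConvexLinearRate}. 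The fix is the inequality used inside the paper's proof of Lemma~\ref{lem:ih88s0}: $\E\bigl[\|z_{[\hat S]}\|^3\bigr]\leq\|z\|\cdot\E\bigl[\|z_{[\hat S]}\|^2\bigr]=\tfrac{\tau}{n}\|z\|^3$, which exploits uniformity of $\hat S$ on the \emph{squared} norm where it factors block-wise, not on the cubed one.
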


Given complexity estimates show which parameters of the problem directly affect on the convergence of the algorithm.

Bound~\eqref{Theorem:StronglyConvexSublinearRate} improves initial estimate~\eqref{Theorem:ConvexSublinearRate}  by the factor~$\sqrt{D_0 / \varepsilon}$. The cost is additional term $\sigma^{-1} = (\min\{\beta, 1\})^{-1}$, which grows up while the condition number~$\beta$ becomes smaller.  

Opposite and limit case is when the \textit{quadratic} part of the objective is vanished ($g(x) \equiv 0 \Rightarrow \sigma = 1$). Algorithm~\ref{MainAlgorithm} is turned to be a parallelized block-independent minimization of the objective components  via cubically regularized Newton steps. Then, the complexity estimate coincides with a known result~\cite{nesterov2006cubic} in a nonrandomized ($\tau = n, \, \rho \to 1$) setting.

Bound~\eqref{Theorem:StronglyConvexLinearRate} guarantees a linear rate of convergence, which means logarithmic dependence on required accuracy~$\varepsilon$ for the number of iterations. The main complexity factor becomes a product of two terms: $\sigma^{-1} \cdot \max\{ \H_F D / \mu, 1 \}^{1/2}$. For the case $\phi(x) \equiv 0$ we can put $\H_F = 0$ and get the stochastic Newton method  from~\cite{qu2016sdna} with its global linear convergence guarantee.

Despite the fact that linear rate is asymptotically better than sublinear, and $O\bigr(1/\sqrt{\varepsilon}\bigr)$ is asymptotically better than $O\bigl(1/\varepsilon\bigr)$, we need to take into account all the factors, which may slow down the algorithm. Thus, while $\mu = \lambda_{\min}(\mG) \to 0$, estimate~\eqref{Theorem:StronglyConvexSublinearRate} is becoming better than~\eqref{Theorem:StronglyConvexLinearRate}, as well as~\eqref{Theorem:ConvexSublinearRate} is becoming better than~\eqref{Theorem:StronglyConvexSublinearRate} while $\beta \to 0$.

\subsection{Implementation Issues}

Let us explain how one step of the method can be performed, which requires the minimization of the cubic model~\eqref{DefinitionOfStep}, possibly with some simple convex constraints. 

The first and the classical approach was proposed in~\cite{nesterov2006cubic} and, before for trust-region methods, in~\cite{conn2000trust}. It works with unconstrained ($Q \equiv \R^N$) and differentiable case ($\psi(x) \equiv 0)$. Firstly we need to find a root of a special one-dimensional nonlinear equation (this can be done, for example, by simple Newton iterations). 
After that, we just solve one linear system to produce a step of the method. Then, total complexity of solving the subproblem can be estimated as $O(d^3)$ arithmetical operations, where $d$ is the dimension of subproblem, in our case: $d = |S|$. Since some matrix factorization is used, the cost of the cubically regularized Newton step is actually similar by efficiency to the classical Newton one. See also~\cite{gould2010solving} for detailed analysis. For the case of affine constraints, the same procedure can be applied. Example of using this technique is given by Lemma~\ref{lem:b897sg99} from the next section.

Another approach is based on finding an inexact solution of the subproblem by the fast approximate eigenvalue computation~\cite{agarwal2016finding} or by applying gradient descent~\cite{carmon2016gradient}. Both of these schemes provide global convergence guarantees. Additionally, they are Hessian-free. Thus we need only a procedure of multiplying quadratic part of~\eqref{CubicModelMain} to arbitrary vector, without storing the full matrix. The latter approach is the most universal one and can be spread to the composite case, by using proximal gradient method or its accelerated variant~\cite{nesterov2013gradient}.

There are basically two strategies to find parameter $H_k$ on every iteration: a \textit{constant choice} $H_k := \max_{i \in S_k}\{ \H_i \}$ or $H_k := \H_F$, if Lipschitz constants of the Hessians are known, or simple \textit{adaptive procedure}, which performs a truncated binary search and has a logarithmic cost per one step of the method. Example of such procedure can be found in primal-dual Algorithm~\ref{DualAlgorithm} from the next section.

\subsection{Extension of the Problem Class}

The randomized cubic model~\eqref{CubicModelMain}, which has been considered and analyzed before, arises naturally from the separable structure~\eqref{eq:main-structure} and by our smoothness assumptions~\eqref{SmoothnessA},~\eqref{SmoothnessPhi}. Let us discuss an interpretation of Algorithm~\ref{MainAlgorithm} in terms of general problem $\displaystyle \min_{x \in \R^N} F(x)$ with twice-differentiable $F$ (omitting non-differentiable component for simplicity). One can state and minimize the model $m_{H, S}(x; y) \equiv
F(x) + \langle (\nabla F(x))_{[S]}, y \rangle + \frac{1}{2}\langle (\nabla^2 F(x))_{[S]}y, y \rangle + \frac{H}{6}\| y_{[S]} \|^3$ which is a \textit{sketched} version of the originally proposed Cubic Newton method~\cite{nesterov2006cubic}. For alternative sketched variants of Newton-type methods but without cubic regularization see~\cite{pilanci2015randomized}.

The latter model~$m_{H, S}(x; y)$ is the same as $M_{H, S}(x; y)$ when inequality~\eqref{SmoothnessA} from the smoothness assumption for $g$ is exact equality, i.e. when the function $g$ is a quadratic with the Hessian matrix $\nabla^2 g(x) \equiv \mA$. Thus, we may use $m_{H, S}(x; y)$ instead of $M_{H, S}(x; y)$, which is still computationally cheap for small $|S|$. However, this model does not give any convergence guarantees for the general $F$, to the best of our knowledge, unless $S = [n]$. But it can be a workable approach, when the separable structure~\eqref{eq:main-structure} is not provided.

Note also, that Assumption~\ref{AssumptionSmoothnessPhi} about Lipschitz-continuity of the Hessian is not too restrictive. Recent result~\cite{grapiglia2017regularized} shows that Newton-type methods with cubic regularization and with a standard procedure of \textit{adaptive} estimation of $\H_F$  automatically fit the actual level of smoothness of the Hessian without any additional changes in the algorithm.

Moreover, step~\eqref{DefinitionOfStep} of the method as the global minimum of $M_{H, S}(x; y)$ is well-defined and computationally tractable even in nonconvex cases~\cite{nesterov2006cubic}. Thus we can try to apply the method to nonconvex objective as well, but without known theoretical guarantees for $S \not= [n]$.

\section{Empirical Risk Minimization} \label{sec:ERM}


One of the most popular examples of optimization problems in machine learning is \textit{empirical risk minimization} problem, which in many cases can be formulated as follows:
\begin{equation} \label{ERMproblem}
\min_{w \in \R^d} \biggl[ P(w) \equiv \frac{1}{m} \sum_{i = 1}^m \phi_i(b_i^T w) + \lambda g(w) \biggr],
\end{equation} 
where $\phi_i$ are convex \textit{loss functions}, $g$ is a \textit{regularizer}, variables $w$ are \textit{weights} of a model and $m$ is a size of a dataset.

\subsection{Constrained Problem Reformulation} 
Let us consider the case, when the dimension $d$ of problem~\eqref{ERMproblem} is very \textit{huge} and $d \gg m$. This asks us to use some coordinate-randomization technique. Note that formulation~\eqref{ERMproblem} does not directly fit our problem setup~\eqref{eq:main-structure}, but we can easily transform it to the following constrained optimization problem, by introducing new variables $\alpha_i \equiv b_i^T w$:
\begin{equation} \label{ERMproblemConstrained}
\min_{\substack{w \in \R^d\, \\ \alpha \in \R^m} } \biggl[  \frac{1}{m} \sum_{i = 1}^m \phi_i(\alpha_i) + \lambda g(w) + \sum_{i = 1}^m \mathbb{I}\{ \alpha_i = b_i^T w \}  \biggr].
\end{equation}
Following our framework, on every step we will sample a random subset of coordinates $S \subset [d]$ of weights $w$, build the  cubic model of the objective (assuming that $\phi_i$ and $g$ satisfy~\eqref{SmoothnessA},~\eqref{SmoothnessPhi}):
\begin{align*}
&M_{H, S}(w, \alpha; y, h) \equiv \lambda\biggl(  \bigl\langle (\nabla g(w))_{[S]}, y \bigr\rangle + \frac{1}{2}\bigl\langle \mA_{[S]}y, y \bigr\rangle  \biggr) + \\
&+ \frac{1}{m} \biggl(\sum_{i = 1}^m \Bigl( \phi_i^{\prime}(\alpha_i)h_i + \frac{1}{2}\phi_i^{\prime \prime}(\alpha_i)h_i^2 \Bigr) + \frac{H}{6}\|h\|^3 \biggr) + P(w)
\end{align*}
and minimize it by $y$ and $h$ on the affine set:
\begin{equation} \label{ERMproblemConstrainedSubproblem}
(y^{*}, h^{*}) \; := \argmin_{\substack{y \in \R^d_{[S]}, h \in \R^m \\[3pt] \text{subject to} \; h = \mB y } } M_{H, S}(w, \alpha; y, h),
\end{equation}
where rows of matrix $\mB \in \R^{m \times d}$ are $b_i^T$. Then, updates of the variables are: $w^{+} := w + y^{*}$ and $\alpha^{+} := \alpha + h^{*}.$

The following lemma is addressing the issue of how to solve~\eqref{ERMproblemConstrainedSubproblem}, which is required on every step of the method. Its proof can be found in the supplementary material.

\begin{lemma}\label{lem:b897sg99} Denote by $\hat{\mB} \in \R^{m \times |S|}$ the submatrix of $\mB$ with row indices from $S$, by $\hat{\mA} \in \R^{|S| \times |S|}$ the submatrix of $\mA$ with elements whose both indices are from $S$, by $b_1 \in \R^{|S|}$ the subvector of $\nabla g(w)$ with element indices from $S$. Denote vector $b_2 \equiv \bigr(\phi_i^{\prime}(\alpha_i)\bigr)_{i = 1}^m$ and $b \equiv m \lambda b_1 + \hat{\mB}^T b_2$.
	Define the family of matrices $\mZ(\tau): \R_{+} \to \R^{|S| \times |S|}$:
	$$
	\mZ(\tau) \defeq m \lambda \hat{\mA} + \hat{\mB}^T\Bigl( \diag(\phi_i^{\prime \prime} (\alpha_i)) + \frac{H\tau}{2} \mI  \Bigr)\hat{\mB}.
	$$
	Then the solution $(y^{*}, h^{*})$ of~\eqref{ERMproblemConstrainedSubproblem} can be found from the equations:
	$\mZ(\tau^{*}) y^{*}_S = -b,  \; h^{*} = \hat{\mB} y^{*}_S$,
	where $\tau^{*} \geq 0$ satisfies one-dimensional nonlinear equation:
	$\tau^{*} = \| \hat{\mB} (\mZ(\tau^{*}))^{\dagger} b \| $ and $y^{*}_S \in \R^{|S|}$ is the subvector of the solution $y^{*}$ with element indices from $S$. 
\end{lemma}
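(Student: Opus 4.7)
The first step is to eliminate the affine constraint and reduce the subproblem~\eqref{ERMproblemConstrainedSubproblem} to an unconstrained problem in a single block of variables. Since $y \in \R^d_{[S]}$, only the subvector $y_S \in \R^{|S|}$ (indexed by $S$) is free, and the constraint $h = \mB y$ becomes $h = \hat{\mB} y_S$, where $\hat{\mB}$ collects the columns of $\mB$ indexed by $S$. Substituting into $M_{H,S}$, dropping the constant $P(w)$, and multiplying by $m$ turns the subproblem into minimizing
$$
\Psi(y_S) \;\defeq\; \langle b, y_S\rangle + \tfrac{1}{2}\langle(m\lambda\hat{\mA} + \hat{\mB}^T \diag(\phi_i''(\alpha_i))\hat{\mB})\,y_S, y_S\rangle + \tfrac{H}{6}\|\hat{\mB} y_S\|^3
$$
over $y_S \in \R^{|S|}$, where $b = m\lambda b_1 + \hat{\mB}^T b_2$ as defined in the statement.

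Next, I would write the stationarity condition $\nabla \Psi(y_S^*) = 0$. The quadratic part is routine; the only non-routine term is the cubic $\tfrac{H}{6}\|\hat{\mB} y_S\|^3$, whose gradient equals $\tfrac{H}{2}\|\hat{\mB} y_S\|\,\hat{\mB}^T \hat{\mB} y_S$. Introducing the scalar auxiliary variable $\tau \defeq \|\hat{\mB} y_S^*\| = \|h^*\|$, the stationarity equation becomes
$$
\Bigl( m\lambda \hat{\mA} + \hat{\mB}^T \diag(\phi_i''(\alpha_i))\hat{\mB} + \tfrac{H\tau}{2}\hat{\mB}^T\hat{\mB}\Bigr) y_S^* \;=\; -b,
$$
i.e.\ $\mZ(\tau)\,y_S^* = -b$ with $\mZ(\tau)$ exactly as in the lemma. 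Because $\Psi$ is convex (all three terms are convex in $y_S$), any such stationary point is a global minimum, so the $(y^*, h^*)$ produced this way indeed solves the subproblem.

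It remains to determine the scalar $\tau^*$. Taking a pseudoinverse (which is justified because $b$ necessarily lies in the range of $\mZ(\tau)$ at any stationary point, as $b$ appears on the right-hand side) gives $y_S^* = -\mZ(\tau^*)^\dagger b$. Substituting this back into the definition $\tau^* = \|\hat{\mB} y_S^*\|$ yields the one-dimensional equation $\tau^* = \|\hat{\mB}\,(\mZ(\tau^*))^\dagger b\|$, which is the claimed characterization. Finally, one recovers $h^* = \hat{\mB} y_S^*$ from the eliminated constraint.

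\textbf{Main obstacle.} The one genuine difficulty is the cubic term, which makes the subproblem nonlinear; the standard trick, going back to \citet{nesterov2006cubic} and \citet{conn2000trust}, is to ``freeze'' $\tau = \|h\|$, solve the resulting linear system parametrically, and then enforce consistency of $\tau$ via a one-dimensional scalar equation whose right-hand side is monotone in $\tau$ (so a unique nonnegative root exists and can be located by Newton or bisection). Handling rank deficiency of $\mZ(\tau)$ through the pseudoinverse — together with a compatibility check on $b$ — is the only other technical subtlety, and it mirrors the treatment in the trust-region literature.
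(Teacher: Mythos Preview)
Your proposal is correct. The only difference from the paper's proof is methodological: the paper keeps both variables $(x,h)$ and the linear constraint $h=\hat{\mB}x$, writes the KKT system with a multiplier $\mu\in\R^m$, then eliminates $\mu$ from the three KKT equations to arrive at exactly the same relation $\mZ(\|\hat{\mB}x\|)\,x=-b$. You instead eliminate the constraint up front by substituting $h=\hat{\mB}y_S$ and write a single unconstrained first-order condition. Both routes produce the identical linear system and the same scalar fixed-point equation for $\tau^*$; your approach is slightly more elementary (no Lagrange multipliers, no KKT invocation), while the paper's is more mechanical and would generalize more readily if additional constraints were present. Neither proof fully justifies the pseudoinverse step (that $b$ lies in the range of $\mZ(\tau^*)$ when $\mZ(\tau^*)$ is singular), so you are not missing anything the paper supplies.
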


Thus, after we find the root of nonlinear \textit{one-dimensional} equation, we need to solve $|S| \times |S|$ linear system to compute $y^{*}$. Then,  to find $h^{*}$ we do one matrix-vector multiplication with the matrix of size $m \times |S|$ . Matrix $\mB$ usually has a sparse structure when $m$ is big, which also should be used in effective implementation.

\subsection{Maximization of the Dual Problem}

Another approach to solving optimization problem~\eqref{ERMproblem} is to maximize its Fenchel dual~\cite{rockafellar1997convex}:
\begin{equation} \label{ERMDualProblem}
\max_{\alpha \in \R^m} \biggl[ D(\alpha) \equiv \frac{1}{m}\sum_{i = 1}^m -\phi_i^{*}(-\alpha_i) - \lambda g^{*}\Bigl( \frac{1}{\lambda m} \mB^T \alpha \Bigr) \biggr],
\end{equation}
where $g^{*}$ and $\{ \phi_i^{*} \}$ are \textit{the Fenchel conjugate} functions of $g$ and $\{ \phi_i \}$ respectively, $f^{*}(s) \defeq \sup_x [ \, \langle s, x \rangle - f(x) \, ]$ for arbitrary $f$. It is know~\cite{bertsekas1978local}, that if $\phi_i$ is twice-differentiable in a neighborhood of $y$ and $\nabla^2 \phi_i(y) \succ 0$ in this area, then its Fenchel conjugate $\phi_i^{*}$ is also twice-differentiable in some neighborhood of $s = \nabla \phi_i (y)$ and it holds: $\nabla^2 \phi^{*}_i(s) = ( \nabla^2 \phi_i (y) )^{-1}$. 

Then, in a case of smooth differentiable $g^{*}$ and twice-differentiable $\phi_i^{*}, i \in [m]$ we can apply our framework to~\eqref{ERMDualProblem}, by doing cubic steps in random subsets of the dual variables $\alpha \in \R^m$. The primal $w \in \R^d$ corresponded to particular $\alpha$ can be computed from the stationary equation
$$
w = \nabla g^{*}\biggl( \frac{1}{\lambda m} \mB^T \alpha \biggr),
$$
which holds for solutions of primal~\eqref{ERMproblem} and dual~\eqref{ERMDualProblem} problems in a case of strong duality.

Let us assume that the function $g$ is $1$-strongly convex (which is of course true for $\ell_2$-regularizer $1/2\|w\|_2^{2}$). Then for $G(\alpha) \equiv \lambda g^{*}\Bigl(\frac{1}{\lambda m} \mB^T \alpha \Bigr)$ the uniform bound for the Hessian exists: $\nabla^2 G(\alpha) \preceq \frac{1}{\lambda m^2} \mB^T \mB$. As before we build the randomized cubic model and compute its minimizer (setting $Q \equiv \bigcap_{i = 1}^m \dom\phi_i^{*}$):
\begin{align*}
&M_{H, S}(\alpha, h) \equiv -D(\alpha) + \lambda \left\langle \nabla g^{*}\biggl(\frac{1}{\lambda m} \mB^T \alpha \biggr), h_{[S]} \right\rangle + \\
&+ \frac{1}{2 \lambda m^2} \bigl\| \mB h_{[S]} \|^2  + \frac{1}{m} \sum_{i \in S}\biggl[ -\bigl\langle \nabla \phi_i^{*}(-\alpha_i), h_i \bigr\rangle + \\
&+ \frac{1}{2} \bigl\langle \nabla^2 \phi_i^{*}(-\alpha_i) h_i, h_i \bigr\rangle \biggr] + \frac{H}{6}\|h_{[S]}\|^3; \; S \subset [m],
\end{align*}
\vspace*{-15pt}
\begin{align*}
T_{H, S}(\alpha) &\equiv \argmin_{\substack{h \in \R_{[S]}^m \\[2pt] \text{subject to} \; \alpha+h \in Q}} M_{H, S}(\alpha, h), \\
M_{H, S}^{*}(\alpha) &\equiv M_{H, S}(\alpha, T_{H, S}(\alpha)).
\end{align*}
Because in general we may not know exact Lipschitz-constant for the Hessians, we do an adaptive search for estimating $H$. Resulting primal-dual scheme is presented in Algorithm~\ref{DualAlgorithm}. When a small subset of coordinates $S$ is used, the most expensive operations become: computation of the objective at a current point $D(\alpha)$ and the matrix-vector product $\mB^T\alpha$. Both of them can be significantly optimized by storing already computed values in memory and updating only changed information on every step.
\begin{algorithm}[ht!]
	\caption{Stochastic Dual Cubic Newton Ascent (SDCNA)}
	\label{DualAlgorithm}
	\begin{algorithmic}[1]
		\STATE \textbf{Parameters:} sampling distribution $\Sam$
		\STATE \textbf{Initialization:} choose initial $\alpha^0 \in Q$ and $H_0 > 0$
		\FOR {$k=0,1,2,\dots$}
		\STATE Make a primal update $w^{k} := \nabla g^{*}\bigl( \frac{1}{\lambda m} \mB^T \alpha^{k}\bigr)$
		\STATE Sample $S_k \sim \Sam$ \\[3pt]
		\STATE While $M_{H_k, S_k}^{*}(\alpha^k) > -D(\alpha^k + T_{H_k, S_k}(\alpha^k))$ do \\[4pt]
		\STATE $\qquad \qquad H_k := 1/2 \cdot  H_k$ \\[3pt]
		\STATE Make a dual update $\alpha^{k + 1} := \alpha^k + T_{H_k, S_k}(x^k)$ \\[2pt]
		\STATE Set $H_{k + 1} := 2 \cdot H_k$
		
		\ENDFOR
	\end{algorithmic}
\end{algorithm}

\section{Numerical experiments} \label{sec:numerics}

\subsection{Synthetic} We consider the following synthetic regression task: \[\min_{x \in \R^N} \frac{1}{2}\|\mA x - b\|_2^2 + \sum_{i = 1}^N \frac{c_i}{6}|x_i|^3\] with randomly generated parameters and for different $N$. On each problem of this type we run Algorithm~\ref{MainAlgorithm} and evaluate total computational time until reaching $10^{-12}$ accuracy in function residual. Using middle-size blocks of coordinates on each step is the best choice in terms of total computational time, comparing it with small coordinate subsets and with full-coordinate method. 
This agrees with the provided complexity estimates for the method: an increase of the batch size speeds up convergence rate linearly, but slows down the cost of one iteration cubically. Therefore, the optimum size of the block is on a medium level.

\begin{figure}[ht]
	\begin{center}
		\includegraphics[width=0.32\columnwidth]{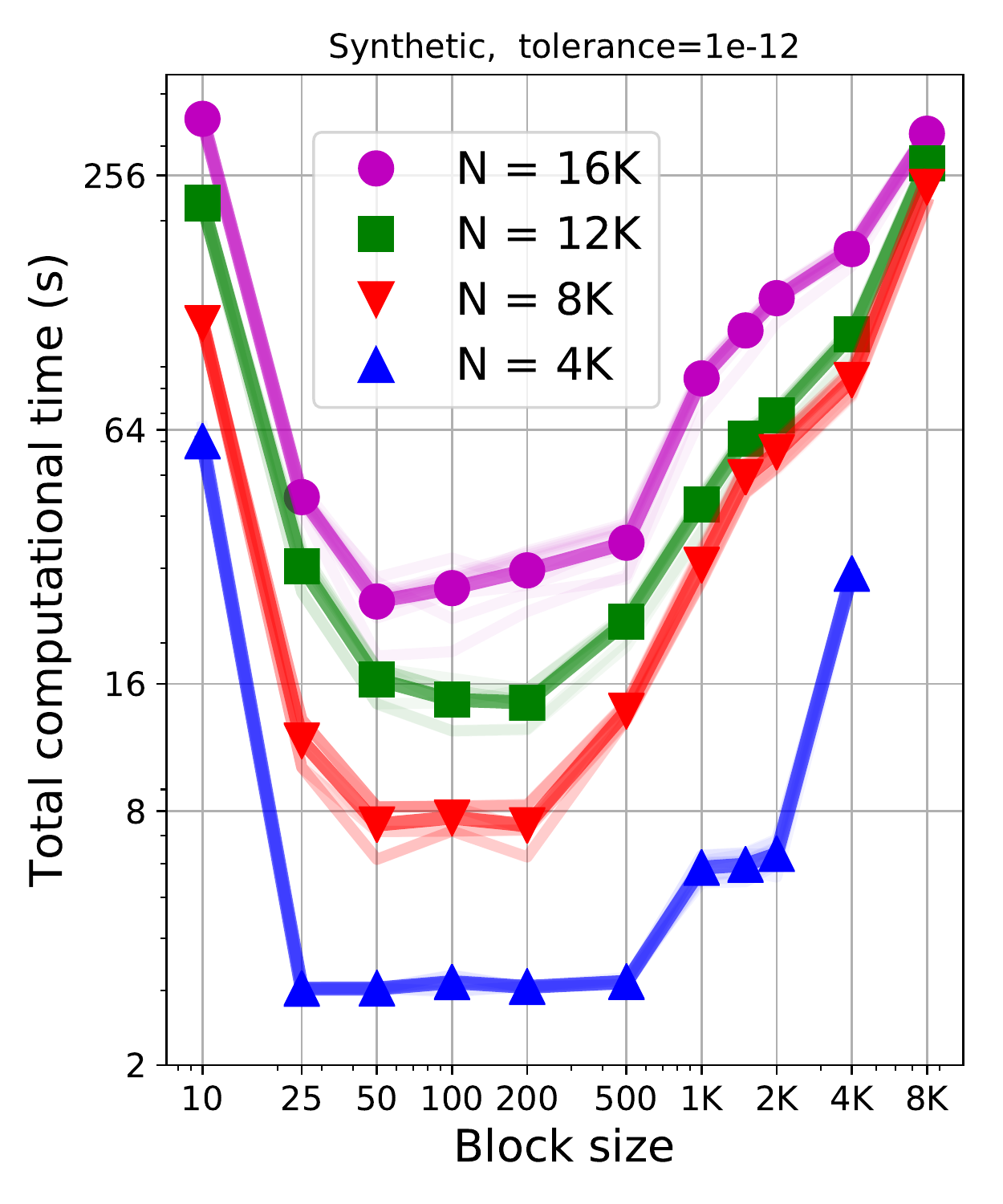}
		\includegraphics[width=0.32\columnwidth]{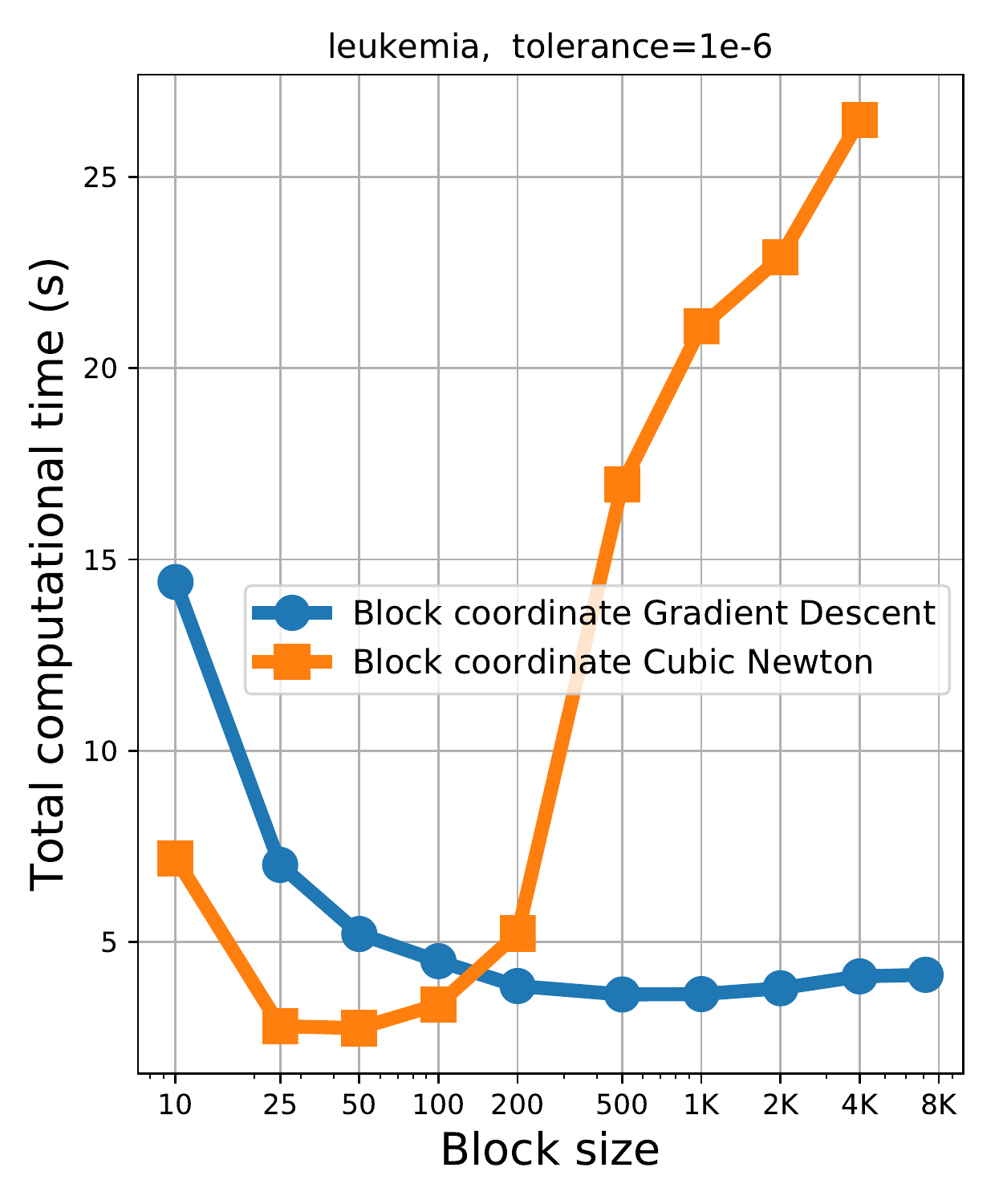}
		\includegraphics[width=0.32\columnwidth]{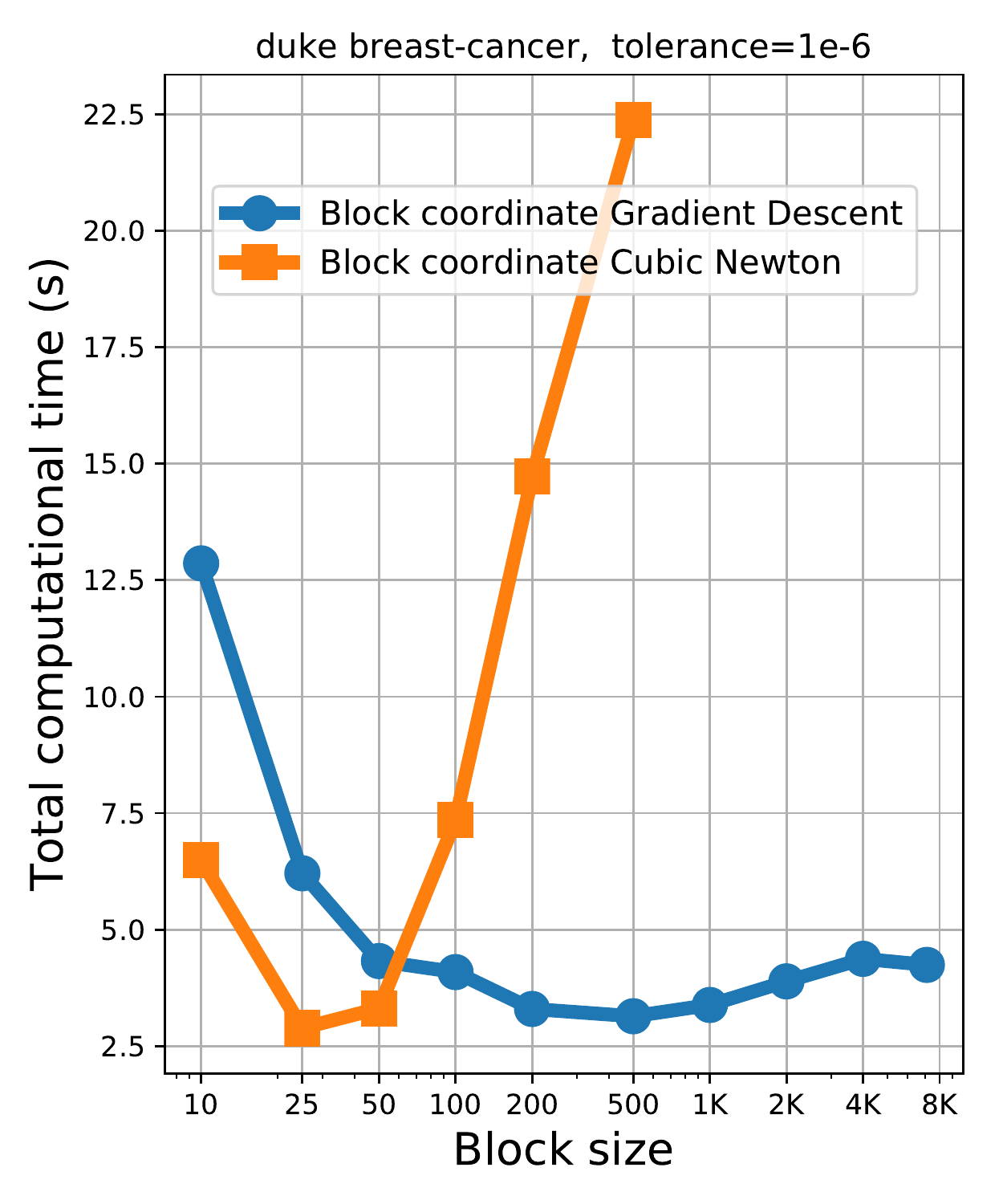}
		\caption{Time it takes to solve a problem for different sampling block sizes. Left: synthetic problem. Center and right: logistic regression for real data. }
		\label{graph1}
	\end{center}
\end{figure}

\subsection{Logistic regression} In this experiment we train $\ell_2$-regularized logistic regression model for classification task with two classes by its constrained reformulation~\eqref{ERMproblemConstrained} and compare the Algorithm~\ref{MainAlgorithm} with the Block coordinate Gradient Descent (see, for example, \cite{tseng2009coordinate}) on the datasets\footnote{\url{http://www.csie.ntu.edu.tw/~cjlin/libsvmtools/datasets/}}: \textit{leukemia}  ($m=38, d=7129$) and \textit{duke breast-cancer} ($m=44, d=7129$). We see that using coordinate blocks of size $25 - 50$ for the Cubic Newton outperforms all other cases of both methods in terms of total computational time. Increasing block size further starts to significantly slow down the method because of high cost of every iteration.

\subsection{Poisson regression} In this experiment we train Poisson model for regression task with integer responses by the primal-dual Algorithm~\ref{DualAlgorithm} and compare it with SDCA~\cite{shalev2013stochastic} and SDNA~\cite{qu2016sdna} methods on synthetic ($m = 1000, d = 200$) and real data\footnote{\url{https://www.kaggle.com/pablomonleon/montreal-bike-lanes}} ($m = 319, d = 20$). 
Our approach requires smaller number of epochs to reach given accuracy, but 
computational efficiency of every step is the same as in SDNA method. 

\begin{figure}[ht]
	\begin{center}
		\includegraphics[width=0.48\columnwidth]{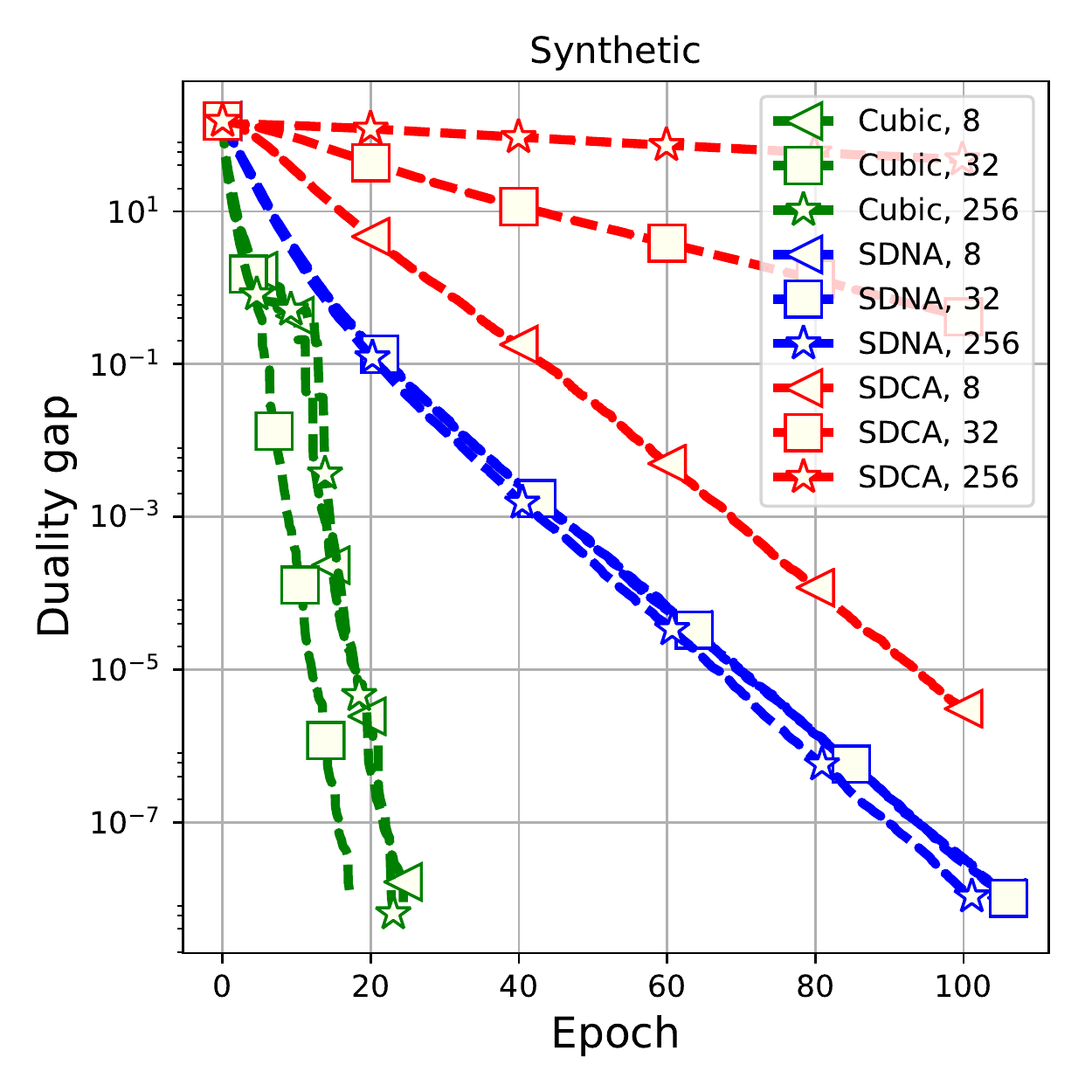}
		\includegraphics[width=0.48\columnwidth]{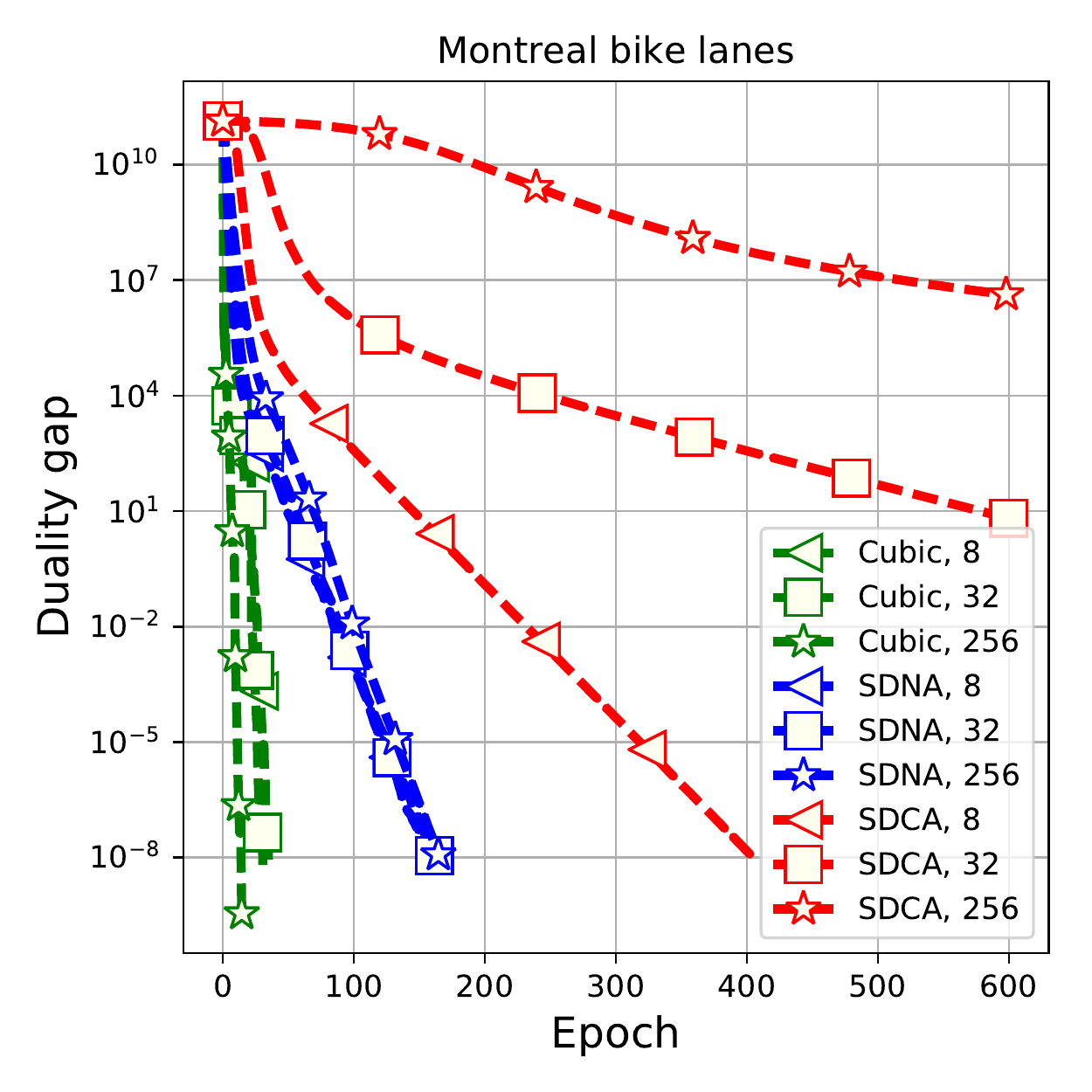}
		\caption{Comparison of Algorithm~\ref{DualAlgorithm} (marked as Cubic) with SDNA and SDCA methods for minibatch sizes $\tau = 8, 32, 256$, training Poisson regression. Left: synthetic. Right: real data.}
		\label{graph2}
	\end{center}
\end{figure}

In the following set of experiments with Poisson regression we take real datasets: 
\textit{australian} ($m = 690, d = 14$), \textit{breast-cancer} ($m = 683, d = 10$), \textit{splice} ($m = 1000, d = 60$), \textit{svmguide3} ($m = 1243, d = 21$) and use for response a random vector $y \in (\mathbb{N}\cup\{0\})^{m}$ from the standard Poisson distribution.

\begin{figure}[ht]
	\begin{center}
		\includegraphics[width=0.48\columnwidth]{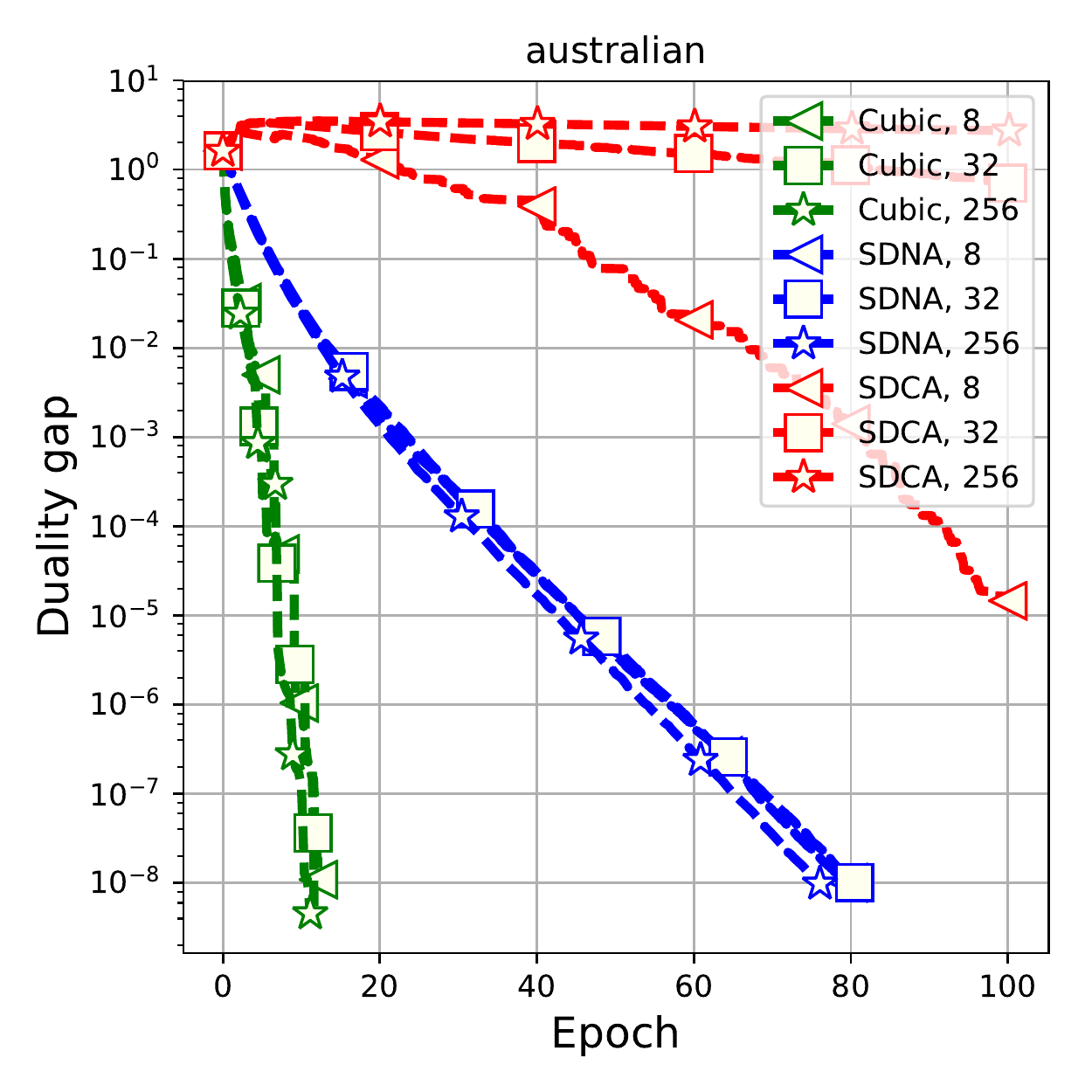}
		\includegraphics[width=0.48\columnwidth]{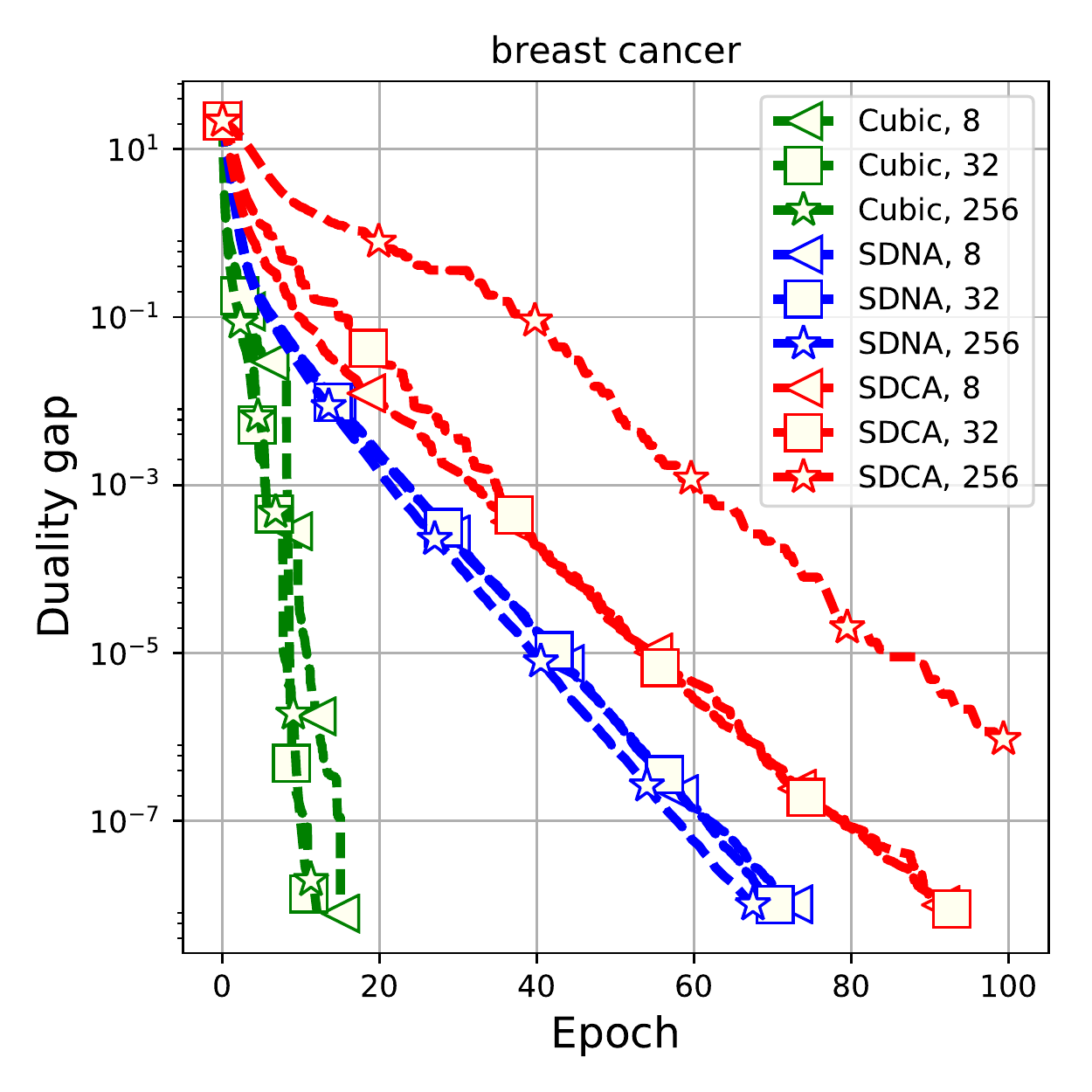}
		\\
		\includegraphics[width=0.48\columnwidth]{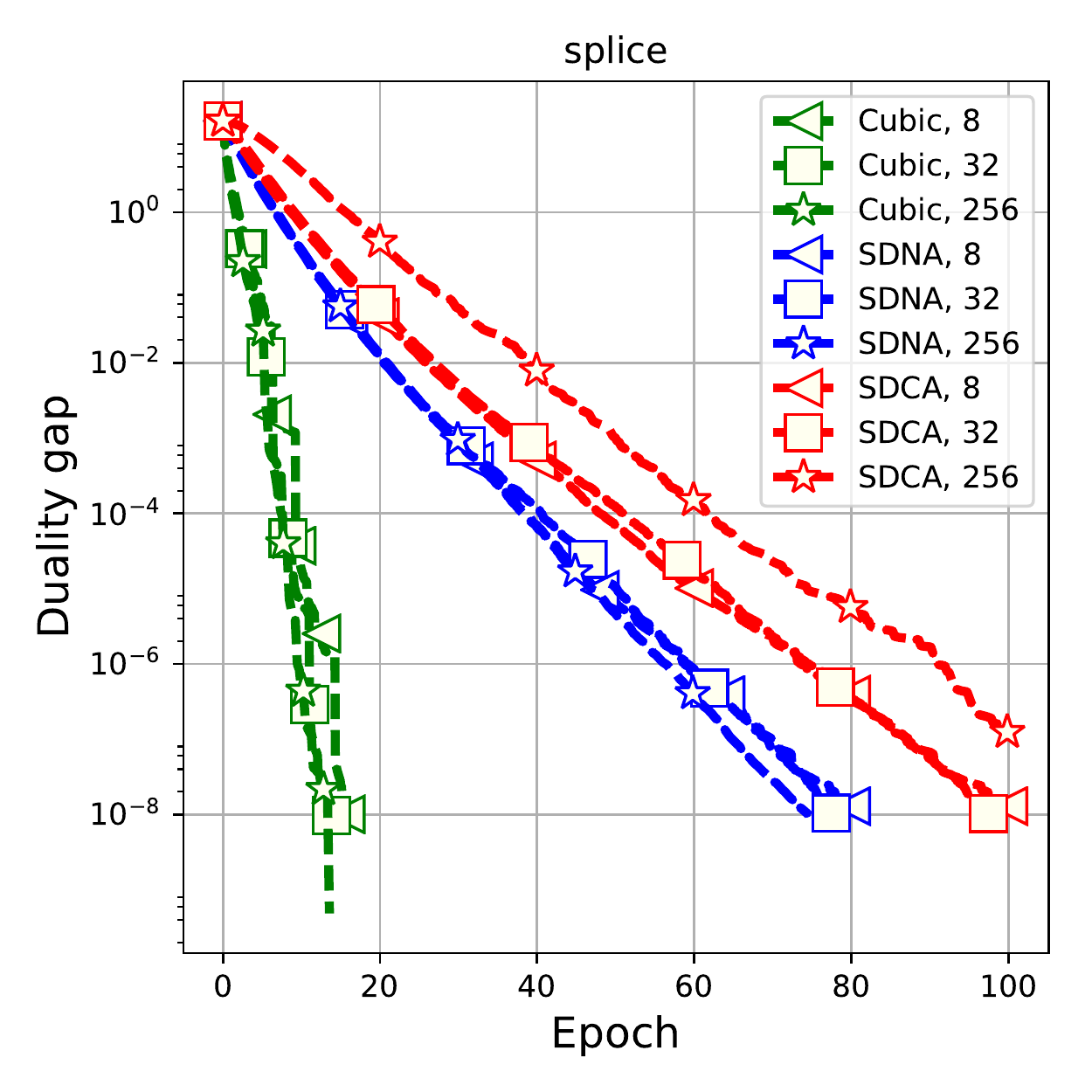}		
		\includegraphics[width=0.48\columnwidth]{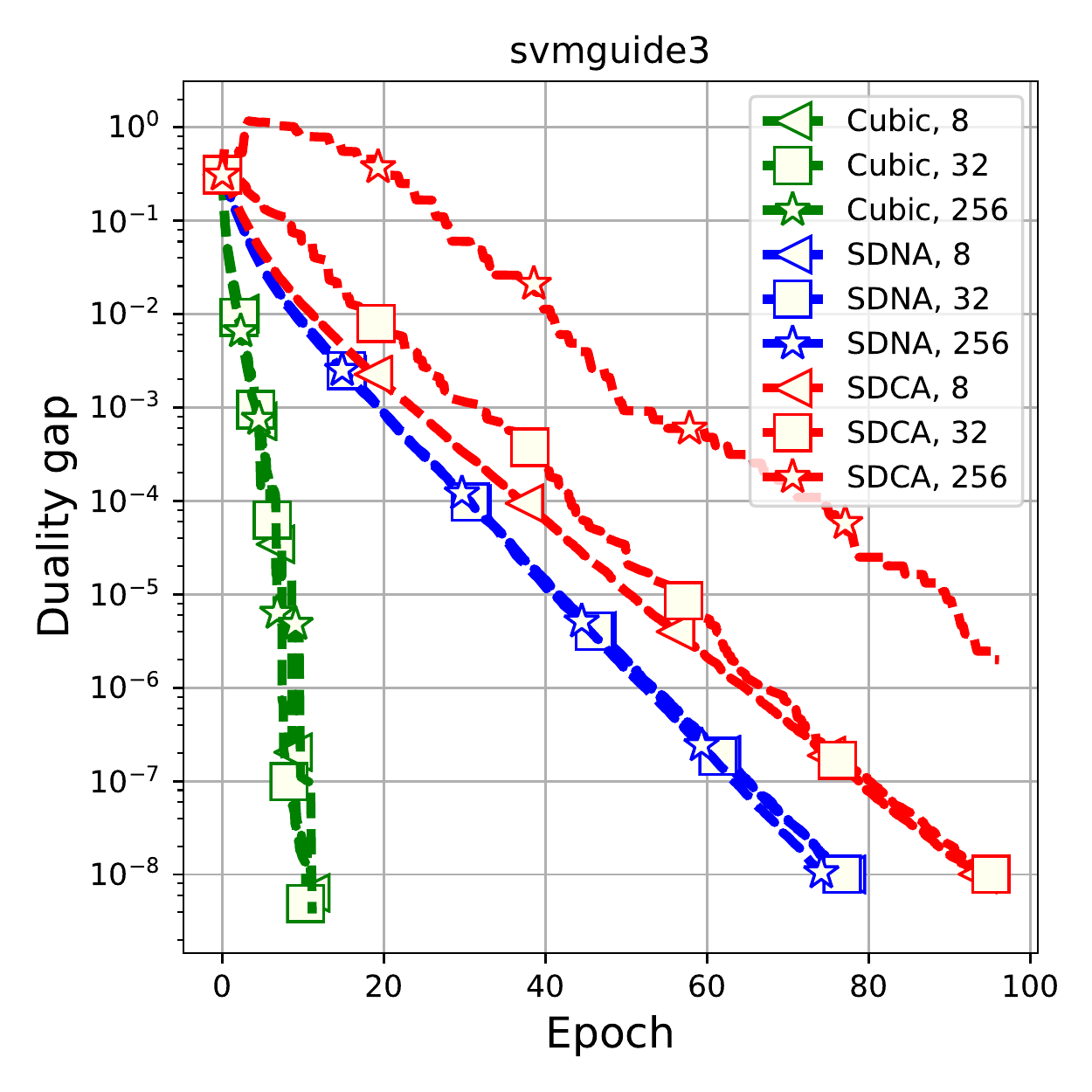}		
		\caption{Comparison of Algorithm~\ref{DualAlgorithm} (marked as Cubic) with SDNA and SDCA methods for minibatch sizes $\tau = 8, 32, 256$, training Poisson regression.}
		\label{graph2}
	\end{center}
	\vskip -0.2in
\end{figure}

We see that in all the cases our method (Algorithm~\ref{DualAlgorithm}) outperforms state-of-the-art analogues in terms of number of data accesses.

\subsection{Experimental setup}

In all our experiments we use $\tau$-nice sampling, i.e., subsets of fixed size chosen from all such subset uniformly at random (note that such subsets overlap). 

\medskip

In the experiments with synthetic cubically regularized regression we generate a data in the following way: sample firstly a matrix $U \in \R^{10 \times N}$, each entry of which is identically distributed from $\mathcal{N}(0, 1)$, then put $A := U^{T} U \in \R^{N \times N}$, $b := -U^T \xi$, where $\xi \in \R^{10}$ is a normal vector: $\xi_i \sim \mathcal{N}(0, 1)$ for each $1 \leq i \leq 10$, and $c_j := 1 + |v_j|$ where $v_j\sim \mathcal{N}(0, 1)$ for all $1 \leq j \leq N$.
Running Algorithm~\ref{MainAlgorithm} we use for $H_k$ the known Lipschitz constants of the Hessians: $H_k := \max_{i \in S_k} c_i$.

\medskip

In the experiments with $\ell_2$-regularized\footnote{Regularization parameter $\lambda$ in all the experiments with logistic and Poisson regression set to $1/m$.} logistic regression we use the \textit{Armijo rule} for computing step length in the Block coordinate gradient descent and the \textit{constant choice} for the parameters $H_k$ in Algorithm~\ref{MainAlgorithm} provided by Proposition~\ref{prop:logistic}.

\medskip

In the experiments with Poisson regression for the methods SDNA and SDCA we use \textit{damped Newton method} as a computational subroutine to compute one method step, using it until reaching $10^{-12}$ accuracy for the norm of the gradient in the corresponding subproblem and the same accuracy for solving inner cubic subproblem in Algorithm~\ref{DualAlgorithm}. For the synthetic experiment with Poisson regression, we generate data matrix $B \in \R^{m \times d}$ as independent samples from standard normal distribution $\mathcal{N}(0, 1)$ and corresponding vector of responses $y \in (\mathbb{N}\cup\{0\})^{m}$ from the standard Poisson distribution (in which \textit{mean} parameter equals to $1$).

\section{Acknowledgments}

The work of the first author was partly supported by Samsung Research, Samsung Electronics, the Russian Science Foundation Grant 17-11-01027 and KAUST.

\clearpage
\bibliography{bibliography}
\bibliographystyle{icml2018}


\clearpage
\onecolumn
\appendix

\icmltitle{Appendix}

\section{Proof of Lemma~\ref{Lemma:LemmaPhiBound}}
\begin{proof}
	Let us show that the Hessian of $\phi_S(x)$ is Lipschitz-continuous with a constant $\displaystyle \max_{i \in S} \{ \H_i \}$. Then~\eqref{LemmaPhiBound} will be fulfilled (see Lemma~1 in~\cite{nesterov2006cubic}). So,
	\begin{align*}
	&\bigl\| \nabla^2 \phi_S(x + h) - \nabla^2 \phi_S(x) \bigr\|^2  \; = \; \max_{\|y\| = 1} \bigl\| \bigl( \nabla^2 \phi_S(x+h) - \nabla^2 \phi_S(x)  \bigr) y \bigr \|^2  \\[7pt]
	& = \; \max_{\|y\| = 1} \sum_{i \in S} \bigl\| \bigl(\nabla^2 \phi_i(x_{(i)} + h_{(i)}) - \nabla^2 \phi_i(x_{(i)}) \bigr) y_{(i)} \bigr \|^2  \\[7pt]
	& \overset{\eqref{SmoothnessPhi}}{\leq} \; \max_{\|y\| = 1} \sum_{i \in S} \Bigl( \H_i^2 \cdot \|h_{(i)}\|^2 \cdot \|y_{(i)}\|^2 \Bigr) \\[7pt]
	& = \; \max_{i \in S} \Bigl\{ \H_i^2 \cdot \|h_{(i)}\|^2  \Bigr\} \; \leq \; \max_{i \in S} \{ \H_i^2 \} \cdot \|h_{[S]}\|^2.
	\end{align*}
\end{proof}

\section{Auxiliary Results}

\begin{lemma} \label{lem:ih88s0} Let Assumptions~\ref{AssumptionStrongConvexityF},~\ref{AssumptionSmoothnessA},~\ref{AssumptionSmoothnessPhi},~\ref{AssumptionUniformSampling} hold. Let minimum of problem~\eqref{eq:main-no-structure} exist, and let $x^{*} \in Q$ be its arbitrary solution: $F(x^{*}) = F^{*}$. 
	
	Then for the random sequence $\{ F(x^k) \}_{k \geq 0}$ generated by Algorithm~\ref{MainAlgorithm} we have a bound: 
	\begin{align} \label{LemmaOneStepProgress}
	\E\bigl[ F(x^{k + 1}) \, | \, x^k  \bigr] \; &\leq \; F(x^{k}) \; - \; \frac{\alpha \tau}{n} \Bigl( F(x^k) - F^{*} \Bigr)  \notag \\[7pt]
	& - \;  \frac{\alpha}{2} \Bigl\langle \Bigl( \frac{\tau}{n} \mG - \alpha\E\bigl[ \mA_{[S_k]} \bigr] \Bigr)(x^k - x^{*}), x^k - x^{*} \Bigr\rangle \; + \; \frac{\alpha^3 \tau}{n} \cdot \frac{\Hf}{2} \cdot \|x^k - x^{*}\|^3,
	\end{align}
	for all $\alpha \in [0, 1]$. 
\end{lemma}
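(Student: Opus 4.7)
The approach is to upper-bound $F(x^{k+1})$ by the randomized model at a carefully chosen deterministic test point, and then take conditional expectation over $S_k$ to convert block-restricted sampled quantities into their full-vector analogues. By the step-acceptance rule of Algorithm~\ref{MainAlgorithm} together with the minimality built into $T_{H_k,S_k}(x^k)$,
\[
F(x^{k+1}) \;\leq\; M^{*}_{H_k,S_k}(x^k) \;\leq\; M_{H_k,S_k}(x^k;y)
\]
for every $y \in \R^N_{[S_k]}$ with $x^k+y\in Q$. I would plug in the natural test point $y_k \defeq \alpha(x^{*}-x^k)_{[S_k]}$, whose feasibility follows from convexity of $Q$ and $\alpha\in[0,1]$ under the (implicit) assumption that $Q$ respects the block decomposition.

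Next, I would turn each piece of $M_{H_k,S_k}(x^k;y_k)$ into something purely in terms of objective values and $\|x^{*}-x^k\|$. For the two second-order $\phi$-terms, combining Lemma~\ref{Lemma:LemmaPhiBound} (in its sharp form $\tfrac{\Hf}{6}\|h_{[S]}\|^3$, which is what its appendix proof actually delivers via Lemma~1 of~\cite{nesterov2006cubic}) with convexity of each $\phi_i$ evaluated at $(1-\alpha)x^k_{(i)}+\alpha x^{*}_{(i)}$ gives
\[
\langle\nabla\phi_{S_k}(x^k),y_k\rangle + \tfrac12\langle\nabla^2\phi_{S_k}(x^k)y_k,y_k\rangle \;\leq\; \alpha\bigl(\phi_{S_k}(x^{*})-\phi_{S_k}(x^k)\bigr) + \tfrac{\Hf}{6}\alpha^3\|(x^{*}-x^k)_{[S_k]}\|^3.
\]
Convexity of each $\psi_i$ produces the analogous bound $\alpha(\psi_{S_k}(x^{*})-\psi_{S_k}(x^k))$ for the proximal summand, and $H_k\leq 2\Hf$ bounds the cubic regularizer $\tfrac{H_k}{6}\|y_k\|^3$ by $\tfrac{\Hf}{3}\alpha^3\|(x^{*}-x^k)_{[S_k]}\|^3$; the two cube pieces add up to $\tfrac{\Hf}{2}\alpha^3\|(x^{*}-x^k)_{[S_k]}\|^3$.

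Taking $\E[\,\cdot\mid x^k]$, uniformity (Assumption~\ref{AssumptionUniformSampling}) gives $\E[(x^{*}-x^k)_{[S_k]}] = \tfrac{\tau}{n}(x^{*}-x^k)$, $\E[\phi_{S_k}(x^{*})-\phi_{S_k}(x^k)] = \tfrac{\tau}{n}(\phi(x^{*})-\phi(x^k))$, and similarly for $\psi$; identity~\eqref{eq:h9s8y8yhs} handles the $\mA$-quadratic form, $\E\langle\mA_{[S_k]}(x^{*}-x^k),x^{*}-x^k\rangle = \langle\E[\mA_{[S_k]}](x^{*}-x^k),x^{*}-x^k\rangle$. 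For the cube expectation I would use the one-line inequality $\|z_{[S]}\|^3 \leq \|z\|\cdot\|z_{[S]}\|^2$ together with $\E\|z_{[S]}\|^2 = \tfrac{\tau}{n}\|z\|^2$, obtaining $\E\|(x^{*}-x^k)_{[S_k]}\|^3 \leq \tfrac{\tau}{n}\|x^{*}-x^k\|^3$.

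Finally, Assumption~\ref{AssumptionStrongConvexityF} for $g$ substitutes $\langle\nabla g(x^k),x^{*}-x^k\rangle \leq g(x^{*})-g(x^k)-\tfrac12\langle\mG(x^{*}-x^k),x^{*}-x^k\rangle$. Collecting: the three increments $g(x^{*})-g(x^k)$, $\phi(x^{*})-\phi(x^k)$, $\psi(x^{*})-\psi(x^k)$ combine with coefficient $\tfrac{\alpha\tau}{n}$ into $-\tfrac{\alpha\tau}{n}(F(x^k)-F^{*})$; the two quadratic forms fuse into $-\tfrac{\alpha}{2}\langle(\tfrac{\tau}{n}\mG-\alpha\E[\mA_{[S_k]}])(x^k-x^{*}),x^k-x^{*}\rangle$; and the cube term becomes $\tfrac{\alpha^3\tau\Hf}{2n}\|x^k-x^{*}\|^3$, which is exactly~\eqref{LemmaOneStepProgress}. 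The only delicate point will be bookkeeping the cubic constant: getting the clean $\tfrac{\Hf}{2}$ factor requires pairing the sharp $\tfrac{\Hf}{6}$ Hessian remainder with the upper bound $H_k\leq 2\Hf$, and the reduction to a deterministic test point $y_k$ implicitly uses block-separability of $Q$ with respect to the block decomposition.
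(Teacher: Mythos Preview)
Your proposal is correct and follows essentially the same route as the paper's proof: bound $F(x^{k+1})$ by the model at a test point lying along the segment toward $x^{*}$, replace the second-order $\phi$-terms by actual $\phi$-increments via Lemma~\ref{Lemma:LemmaPhiBound} (in its sharp $\tfrac{\Hf}{6}$ form, which the paper also tacitly uses), apply convexity of $\phi_i,\psi_i$, take expectation using uniformity, and finish with~\eqref{StrongConvexityF}. The only cosmetic difference is ordering: the paper keeps a deterministic $y$, takes expectation first, and then specializes $y=\alpha(x^{*}-x^k)$ and invokes convexity, whereas you substitute the random $y_k=\alpha(x^{*}-x^k)_{[S_k]}$ first and take expectation last; since $M_{H,S}(x;y)$ depends only on $y_{[S]}$, the two computations are identical. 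Your remark that feasibility of the test point requires $Q$ to be compatible with the block decomposition is well taken and is implicit (unstated) in the paper as well.
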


\begin{proof}
	From Assumption~\ref{AssumptionStrongConvexityF} we have the following bound, $\mG \succeq 0$:
	\begin{equation} \label{APPENDIXfIsStronglyConvex}
	\langle \nabla g(x), z \rangle \; \leq \; g(x + z) - g(x) - \frac{1}{2}\langle \mG z, z \rangle, \qquad x, z \in \R^N.
	\end{equation}
	
	\noindent
	Denote $h^k \equiv T_{H_k, S_k}(x^k)$. Then $x^{k + 1} = x^k + h^k$. By a way of choosing $H_k$ we have:
	\begin{align*}
	F(x^{k + 1}) \quad \leq \quad & M^{*}_{H_k, S_k}(x^k) \\
	\quad \equiv \quad& F(x^k) \; + \; \langle (\nabla g(x^k))_{[S_k]}, h^k \rangle \; + \; \frac{1}{2} \langle \mA_{[S_k]} h^k, h^k \rangle \; \\[5pt]
	&+ \;\frac{1}{2} \sum_{i \in S_k}\Bigl( \langle \nabla \phi_i(x_{(i)}^k), h_{(i)}^k\rangle + \nabla^2 \phi_i(x_{(i)}^k) h_{(i)}^k, h_{(i)}^k \rangle \Bigr) \; + \; \frac{H_k}{6} \|h_{[S_k]}^k\|^3 \; \\[5pt]
	&+ \; \sum_{i \in S_k} \Bigl( \psi_i( x_{(i)}^k + h_{(i)}^k) \; - \; \psi_i(x_{(i)}^k) \Bigr).
	\end{align*}
	
	\noindent
	At the same time, because $h^k$ is a minimizer of the cubic model $M_{H_k, S_k}(x^k, y)$, we can change $h^k$ in the previous bound to arbitrary $y \in \R^N$, such that $x^{k} + y \in Q$:
	\begin{align} \label{APPENDIXInitialBound}
	F(x^{k + 1}) \quad \leq \quad  F(x^k) \; +& \; \langle (\nabla g(x^k))_{[S_k]}, y \rangle \; + \; \frac{1}{2}\langle \mA_{[S_k]} y, y \rangle \; + \;\sum_{i \in S_k} \Bigl( \langle \nabla \phi_i(x_{(i)}^k), y_{(i)} \rangle + \frac{1}{2} \langle \nabla^2 \phi_i(x_{(i)}^k) y_{(i)}, y_{(i)} \rangle  \Bigr)  \notag \\[7pt]  
	&\qquad + \quad \frac{H_k}{6}\|y_{[S_k]}\|^3 + \sum_{i \in S_k}\Bigl(\psi_i(x_{(i)}^k + y_{(i)}) - \psi_i(x_{(i)}^k) \Bigr) \notag \\[7pt]
	\leq \quad F(x^k) \; +& \; \langle (\nabla g(x^k))_{[S_k]}, y \rangle \; + \; \frac{1}{2}\langle \mA_{[S_k]} y, y \rangle \; + \; \sum_{i \in S_k} \Bigl( \phi_i(x_{(i)}^k + y_{(i)}) - \phi_i(x_{(i)}^k) \Bigr) \notag \\[7pt]  
	&\qquad + \quad \frac{H_k + \Hf}{6}\|y_{[S_k]}\|^3 \; + \; \sum_{i \in S_k}\Bigl(\psi_i(x_{(i)}^k + y_{(i)}) - \psi_i(x_{(i)}^k) \Bigr),
	\end{align}
	where the last inequality is given by Lemma~\ref{Lemma:LemmaPhiBound}. Note that
	$$
	\E \bigl[ \|y_{[S_k]} \|^3 \bigr] \; \leq \; \E \bigl[ \|y\| \cdot \| y_{[S_k]} \|^2  \bigr] \; = \;  \|y\| \cdot \E\Biggl[ \sum_{i \in S_k} \|y_{(i)}\|^2  \Biggr]  \; = \;  \|y\| \cdot \Biggl( \frac{\tau}{n} \sum_{i = 1}^n \|y_{(i)}\|^2 \Biggr) \; = \; \frac{\tau}{n} \|y\|^3.
	$$
	Thus, by taking conditional expectation $\E\bigl[ \cdot \, | \, x^k \bigr]$ from~\eqref{APPENDIXInitialBound} we have
	\begin{align*}
	\E\bigl[ F(x^{k + 1}) \, | \, x^k \bigr] \quad \leq \quad F(x^k) \; +& \; \frac{\tau}{n} \langle \nabla g(x^k), y \rangle \; + \; \frac{1}{2} \langle \E\bigl[ \mA_{[S_k]} \bigr] y, y \rangle \\[5pt]
	+& \; \frac{\tau}{n} \sum_{i = 1}^n \Bigl( \phi_i(x_{(i)}^k + y_{(i)}) \; + \; \psi_i(x_{(i)}^k + y_{(i)}) \; - \; \phi_i(x_{(i)}^k) \; - \; \psi_i(x_{(i)}^k) \Bigr) \; + \; \frac{\tau}{n} \cdot \frac{\Hf}{2}\|y\|^3, 
	\end{align*}
	which is valid for arbitrary $y \in \R^N$ such that $x^k + y \in Q$. Restricting $y$ to the segment: $y = \alpha (x^{*} - x^k)$, where $\alpha \in [0, 1]$, by convexity of $\phi_i$ and $\psi_i$ we get:
	\begin{align*}
	\E\bigl[ F(x^{k + 1}) \, | \, x^k \bigr] \quad \leq \quad F(x^k) \; +& \; \frac{\alpha \tau}{n} \langle \nabla g(x^k), x^{*} - x^k \rangle \; + \; \frac{\alpha^2}{2} \langle \E\bigl[ \mA_{S_k} \bigr] x^{*} - x^k, x^{*} - x^k \rangle \\[5pt]
	+& \; \frac{\alpha\tau}{n} \sum_{i = 1}^n \Bigl( \phi_i(x^{*}_{(i)}) + \psi_i(x^{*}_{(i)}) - \phi_i(x^k_{(i)}) - \psi_i(x^k_{(i)})\Bigr) \; + \; \frac{\alpha^3\tau}{n} \cdot \frac{\Hf}{2}\|x^{*} - x^k\|^3.
	\end{align*}
	
	\noindent
	Finally, using inequality~\eqref{APPENDIXfIsStronglyConvex} for $x \equiv x^k$ and $z \equiv x^{*} - x^k$ we get the state of the lemma.
\end{proof}

The following technical tool is useful for analyzing a convergence of the random sequence $\xi_k \equiv F(x^k) - F^{*}$ with high probability. It is a generalization of result from~\cite{richtarik2014iteration}.

\begin{lemma} \label{LemmaRandomSequenceConvergence}
	Let $\xi_0 > 0$ be a constant and consider a nonnegative nonincreasing sequence of random variables $\{ \xi_k \}_{k \geq 0}$ with the following property, for all $k \geq 0$:
	\begin{equation} \label{RandomSequenceConvergence}
	\E\bigl[ \xi_{k + 1} \, | \, \xi_k  \bigr] \; \leq \; \xi_k - \frac{\xi_k^p}{c},
	\end{equation}
	where $c > 0$ is a constant and $p \in \{3/2, \, 2\}$.
	Choose confidence level $\rho \in (0, 1)$. 
	
	Then if we set $0 < \varepsilon < \min\{ \xi_0, c^{1 / (p - 1)} \}$ and 
	\begin{equation}
	K \; \geq \; \frac{c}{\varepsilon^{p - 1}} \biggl( \frac{1}{p - 1} + \log \frac{1}{\rho} \biggr) - \frac{c}{\xi_0^{p - 1} (p - 1)},
	\end{equation}
	we have
	\begin{equation}
	\P( \xi_K \leq \varepsilon ) \; \geq \; 1 - \rho.
	\end{equation}
\end{lemma}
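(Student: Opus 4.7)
The plan is a two-phase argument whose structure mirrors the two additive terms in the stated bound: a \emph{sublinear expectation phase} producing $\frac{c}{(p-1)\varepsilon^{p-1}} - \frac{c}{(p-1)\xi_0^{p-1}}$, followed by a \emph{linear tail phase} producing $\frac{c}{\varepsilon^{p-1}}\log(1/\rho)$. A single application of Markov's inequality to $\xi_K$ would yield only a $\rho^{-(p-1)}$ dependence on the confidence; inserting the linear phase between the Markov step and the sublinear estimate is what turns this into the tight $\log(1/\rho)$.

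For the first phase I would set $a_k \defeq \E[\xi_k]$, take expectations of~\eqref{RandomSequenceConvergence}, and use Jensen's inequality for the convex map $x\mapsto x^p$ ($p>1$) to obtain the deterministic recursion $a_{k+1} \leq a_k - a_k^p/c$. The hypotheses force $a_k \leq c^{1/(p-1)}$ (otherwise the right-hand side would be negative, contradicting $\xi_{k+1}\geq 0$), so the elementary convexity inequality $(1-t)^{-(p-1)} \geq 1 + (p-1)t$ on $[0,1)$ applies and transforms the recursion into $a_{k+1}^{-(p-1)} \geq a_k^{-(p-1)} + (p-1)/c$. Iterating gives $a_k^{-(p-1)} \geq \xi_0^{-(p-1)} + k(p-1)/c$, so that the choice $K_1 \defeq \left\lceil \frac{c}{p-1}\bigl(\varepsilon^{-(p-1)} - \xi_0^{-(p-1)}\bigr)\right\rceil$ guarantees $\E[\xi_{K_1}] \leq \varepsilon$.

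For the second phase I would use that on the event $\{\xi_k>\varepsilon\}$ one has $\xi_k^p/c \geq \varepsilon^{p-1}\xi_k/c$, so the original recursion reduces to $\E[\xi_{k+1}\mid\xi_k]\mathbf{1}_{\xi_k>\varepsilon} \leq q\,\xi_k\mathbf{1}_{\xi_k>\varepsilon}$ with $q \defeq 1 - \varepsilon^{p-1}/c \in [0,1)$. Combining this with the nonincreasing property (which gives $\mathbf{1}_{\xi_{k+1}>\varepsilon} \leq \mathbf{1}_{\xi_k>\varepsilon}$), a short induction starting at $k=K_1$ yields $\E[\xi_k\mathbf{1}_{\xi_k>\varepsilon}] \leq q^{k-K_1}\E[\xi_{K_1}] \leq q^{k-K_1}\varepsilon$ for all $k \geq K_1$. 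Markov's inequality then gives $\P(\xi_K>\varepsilon) \leq q^{K-K_1}$, and using $\log(1/q) \geq \varepsilon^{p-1}/c$ shows that taking $K - K_1 \geq \frac{c}{\varepsilon^{p-1}}\log(1/\rho)$ suffices to make this at most $\rho$. Adding the two phase counts reproduces the bound in the statement.

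The main obstacle is conceptual rather than computational: one has to recognize that the sublinear expectation argument alone cannot yield a $\log(1/\rho)$ concentration factor, and that the nonincreasing property of $\{\xi_k\}$ is exactly what permits carrying the indicator $\mathbf{1}_{\xi_k>\varepsilon}$ through the second-phase induction without losing geometric factors. Once that is noticed, the remaining work---verifying the convexity inequality, the implicit bound $a_k \leq c^{1/(p-1)}$, and the bookkeeping that sums the two phase counts---is routine.
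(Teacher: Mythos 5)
Your proof is correct and follows essentially the same two-phase strategy as the paper's: drive an expected value down to $\varepsilon$ via the sublinear recursion, then exploit the geometric contraction $\E[\,\cdot\,]\leq(1-\varepsilon^{p-1}/c)\E[\,\cdot\,]$ on the truncated/indicator-weighted quantity to earn the $\log(1/\rho)$ factor before applying Markov (the paper works with the truncated sequence $\xi_k^{\varepsilon}$ in both phases, whereas you use the plain expectation in phase one, an immaterial difference). The only notable variation is that your Bernoulli-type inequality $(1-t)^{-(p-1)}\geq 1+(p-1)t$ handles $p=3/2$ and $p=2$ uniformly, where the paper treats the two cases separately.
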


\begin{proof}
	Technique of the proof is similar to corresponding one of Theorem~1 from~\cite{richtarik2014iteration}. For a fixed $0 < \varepsilon < \min\{\xi_0, c^{1/(p - 1)} \}$ define a new sequence of random variables $\{ \xi_k^{\varepsilon} \}_{k \geq 0}$ by the following way:
	$$
	\xi_k^{\varepsilon} \; = \; \begin{cases}
	\xi_k, \; &\text{if} \; \xi_k > \varepsilon, \\
	0, \; &\text{otherwise}.
	\end{cases}
	$$
	It satisfies
	$$
	\xi_k^{\varepsilon} \leq \varepsilon \quad \Leftrightarrow \quad \xi_k \leq \varepsilon, \qquad k \geq 0,
	$$
	therefore, by Markov inequality:
	$$
	\P(\xi_k > \varepsilon) \; = \; \P(\xi_k^{\varepsilon} > \varepsilon) \; \leq \; \frac{\E[\xi_k^{\varepsilon}]}{\varepsilon},
	$$
	and hence it suffices to show that
	$$
	\theta_K \leq \rho \varepsilon,
	$$
	where $\theta_k \defeq \E[ \xi_k^{\varepsilon} ]$. From the conditions of the lemma we get
	$$
	\E\bigl[ \xi_{k + 1}^{\varepsilon} \, | \, \xi_k^{\varepsilon} \bigr] \; \leq\;  \xi_k^{\varepsilon} - \frac{(\xi_k^{\varepsilon})^p}{c}, \qquad \E\bigl[ \xi_{k + 1}^{\varepsilon} \, | \, \xi_k^{\varepsilon} \bigr] \; \leq\; \Bigl(1 - \frac{\varepsilon^{p - 1}}{c} \Bigr) \xi_k^{\varepsilon}, \qquad k \geq 0,
	$$
	and by taking expectations and using convexity of $t \mapsto t^p$ for $p > 1$ we obtain
	\begin{align} \label{FirstThetaBound}
	\theta_{k + 1} \quad &\leq \quad \theta_k - \frac{\theta_k^p}{c},  \qquad k \geq 0, \\[5pt]
	\label{SecondThetaBound}
	\theta_{k + 1} \quad &\leq \quad \biggl(1 - \frac{\varepsilon^{p - 1}}{c}\biggr)\theta_k, \qquad k \geq 0.
	\end{align}
	Consider now two cases, whether $p = 2$ or $p = 3/2$, and find a number $k_1$ for which we get $\theta_{k_1} \leq \varepsilon$.
	\begin{enumerate}
		\item $p = 2$, then
		$$
		\frac{1}{\theta_{k + 1}} - \frac{1}{\theta_k} \; = \; \frac{\theta_k - \theta_{k + 1}}{\theta_{k + 1} \theta_k} \; \geq \;  \frac{\theta_{k} - \theta_{k + 1}}{\theta_k^2} \;  \overset{\eqref{FirstThetaBound}}{\geq} \; \frac{1}{c},
		$$
		thus we have $\frac{1}{\theta_k} \geq \frac{1}{\theta_0} + \frac{k}{c} = \frac{1}{\xi_0} + \frac{k}{c}$, and choosing $k_1 \geq \frac{c}{\varepsilon} - \frac{c}{\xi_0}$ we obtain $\theta_{k_1} \leq \varepsilon$.
		
		\item $p = 3/2$, then
		$$
		\frac{1}{\theta_{k + 1}^{1/2}} - \frac{1}{\theta_k^{1/2}} \; = \; \frac{\theta_k^{1/2} - \theta_{k + 1}^{1/2}}{\theta_{k + 1}^{1/2} \theta_k^ {1/2}} \; = \; \frac{\theta_k - \theta_{k + 1}}{(\theta_k^{1/2} + \theta_{k + 1}^{1/2}) \theta_{k + 1}^{1/2} \theta_k^{1/2}} \; \geq \; \frac{\theta_k - \theta_{k + 1}}{2 \theta_k^{3/2}} \; \overset{\eqref{FirstThetaBound}}{\geq} \; \frac{1}{2c},
		$$
		thus we have $\frac{1}{\theta_k^{1/2}} \geq \frac{1}{\theta_0^{1/2}} + \frac{k}{2c}$, and choosing $k_1 \geq \frac{2c}{\varepsilon^{1/2}} - \frac{2c}{\xi_0^{1/2}}$ we get $\theta_{k_1} \leq \varepsilon$.
	\end{enumerate}
	Therefore, for both cases $p \in \{3/2, 2\}$ it is enough to choose
	$$
	k_1 \quad \geq \quad \frac{c}{p - 1} \biggl( \frac{1}{\varepsilon ^ {p - 1}} - \frac{1}{\xi_0^{p - 1}} \biggr),
	$$
	for which we get $\theta_{k_1} \leq \varepsilon$. Finally, letting $k_2 \geq \frac{c}{\varepsilon^{p - 1}} \log \frac{1}{\rho}$ and $K \geq k_1 + k_2$, we have
	$$
	\theta_K \; \leq \; \theta_{k_1 + k_2} \; \overset{\eqref{SecondThetaBound}}{\leq} \; \biggl(1 - \frac{\varepsilon^{p - 1}}{c} \biggr)^{k_2} \theta_{k_1} \; \leq \; \exp\biggl(-\frac{k_2 \varepsilon^{p - 1}}{c}\biggr) \theta_{k_1} \; \leq \; \rho \varepsilon.
	$$
\end{proof}

\section{Proof of Theorem~\ref{TheoremConvexSublinear}}
\begin{proof} From bound~\eqref{LemmaOneStepProgress}, using $\mG \succeq 0$ and $\E\bigl[ \mA_{[S_k]} \bigr] \preceq \frac{\tau L}{n} \mI$ we get, for all $\alpha \in [0, 1]$:
	$$
	\E\bigl[ F(x^{k + 1}) \, | \, x^k \bigr] \; \leq \; F(x^k) - \frac{\alpha \tau}{n}\Bigl( F(x^k) - F^{*} \Bigr) \; + \; \frac{\alpha^2 \tau}{2n} \biggl( L \|x^k - x^{*}\|^2 + \alpha \Hf \|x^k - x^{*}\|^3 \biggr).
	$$
	Thus, for the random sequence $\xi_k \equiv F(x^k) - F^{*}$ we have a bound, for all $\alpha \in [0, 1]$:
	\begin{equation} \label{Theorem:ConvexFloatBound}
	\E\bigl[ \xi_{k + 1} \, |\, \xi_k \bigr] \; \leq \; \xi_k - \frac{\alpha \tau \xi_k}{n} + \frac{\alpha^2 \tau}{2 n} D_0,
	\end{equation}
	where $D_0 \equiv \max\bigl\{ LD^2 + \Hf D^3, \, \xi_0 \bigr\}$. Minimum of the right hand side is attained at $\alpha^{*} = \frac{\xi_k}{D_0} \leq 1$,
	which substituting to~\eqref{Theorem:ConvexFloatBound} gives: $\E[\xi_{k + 1}|\xi_k] \; \leq \; \xi_k - \frac{\tau \xi_k^2}{2n D_0}$. Applying Lemma~\ref{LemmaRandomSequenceConvergence} complete the proof.
\end{proof}

\section{Proof of Theorem~\ref{TheoremStronglyConvexBeta}}
\begin{proof}
	From bound~\eqref{LemmaOneStepProgress}, restricting $\alpha$ to the segment$\, [0, \sigma]$ and using~\eqref{BetaDefinition} we get:
	\begin{equation} \label{Theorem:StronglyConvexFloatBound}
	\E\bigl[ \xi_{k + 1} \, | \, \xi_k \bigr] \; \leq \; \xi_k - \frac{\alpha \tau \xi_k}{n} + \frac{\alpha^3 \tau}{2n} \Hf \|x^k - x^{*}\|^3,
	\end{equation}
	where $\xi_k \equiv F(x^k) - F^{*}$ as before. To get the first complexity bound, we rough the right hand side, denoting $D_0 \equiv \max\bigl\{\Hf D^3, \xi_0 \bigr\}$:
	$$
	\E\bigl[ \xi_{k + 1} \, | \, \xi_k \bigr] \; \leq \; \xi_k - \frac{\alpha \tau \xi_k}{n} + \frac{\alpha^3 \tau}{2n} \frac{  D_0}{\sigma^2},
	$$
	minimum of which is attained at $\alpha^{*} = \sigma \sqrt{\frac{2}{3} \frac{\xi_k}{D_0}} \leq \sigma$. Therefore we obtain 
	$$\E[\xi_{k + 1}|\xi_k] \; \leq \; \xi_k - (2/3)^{3/2} \frac{\tau \sigma  \xi_k^{3/2}}{n D_0},$$
	and applying Lemma~\ref{LemmaRandomSequenceConvergence} complete the proof.
\end{proof}

\section{Proof of Theorem~\ref{TheoremStronglyConvexMu}}
\begin{proof}
	Because of strong convexity we know that $\beta > 0$ and all the conditions of Theorem~\ref{TheoremStronglyConvexBeta} are satisfied. Using inequality $F(x^k) - F^{*} \geq \frac{\mu}{2}\|x^k - x^{*}\|^2$ for~\eqref{Theorem:StronglyConvexFloatBound},  we have for every $\alpha \in [0, \sigma]$:
	$$
	\E\bigl[\xi_{k + 1} \, | \, \xi_k\bigr] \; \leq \; \biggl( 1 - \frac{\alpha \tau}{n} + \frac{\alpha^3 \tau}{n} \frac{\Hf D}{\mu} \biggr) \xi_k.
	$$
	Minimum of the right hand side is attained at $\alpha^{*} = \sigma \min\bigl\{ \sqrt{\frac{\mu}{3\Hf D}}, 1 \bigr\}$, substituting of which and taking total expectation gives a recurrence
	\begin{align*}
	\E\bigl[\xi_{k + 1}\bigr] \; &\leq \; \Biggl( \, 1 - \frac{2\tau \sigma}{3n} \sqrt{\min\Bigl\{ \frac{\mu}{\Hf D}, 1  \Bigr\}} \, \Biggr) \E\bigl[ \xi_{k} \bigr] \; \leq \; \dots \; \leq \; \Biggl( \, 1 - \frac{2\tau \sigma}{3n} \sqrt{\min\Bigl\{ \frac{\mu}{\Hf D}, 1  \Bigr\}} \, \Biggr)^{\!k} \xi_{0}\\[7pt]
	&\leq \; \exp\Biggl( \, - (k + 1)\frac{2\tau \sigma}{3n} \sqrt{\min\Bigl\{ \frac{\mu}{\Hf D}, 1  \Bigr\}} \, \Biggr) \xi_0.
	\end{align*}
	Thus, choosing $K$ large enough, by Markov inequality we have $\P(\xi_K > \varepsilon) \; \leq \; \E[ \xi_K ] \varepsilon^{-1} \; \overset{\eqref{Theorem:StronglyConvexLinearRate}}{\leq} \; \rho.$
\end{proof}

\section{Proof of Lemma~\ref{lem:b897sg99}}

Using notation from the statement of the lemma and multiplying everything by $m$, we can formulate our target optimization subproblem as follows:
\begin{equation} \label{APPENDIXKKTSubproblem}
\min_{\substack{x \in \R^{|S|}, h \in \R^m \\ \text{s.t.} \; h = \hat{\mB}x}} \biggl[ m\lambda \langle b_1, x \rangle \; + \; \frac{m \lambda}{2} \langle \hat{\mA}x, x \rangle \; + \; \langle b_2, h \rangle \; + \; \frac{1}{2} \langle \mD h, h \rangle \; + \; \frac{H}{6}\|h\|^3 \biggr],
\end{equation}
where $\mD \in \R^{m \times m}$ is a diagonal matrix: $\mD \equiv \diag( \phi_i^{\prime \prime}(\alpha_i) )$. We also denote a subvector of $y$ as $x$ to avoid confusion. The minimum of~\eqref{APPENDIXKKTSubproblem} satisfies the following KKT system:
\begin{equation} \label{APPENDIXKKTSystem}
\begin{cases}
&m\lambda b_1 \; + \; m \lambda \hat{\mA}x \; + \; \hat{\mB}^T \mu \; = \; 0, \\[5pt]
&b_2 \; + \; \mD h \; + \; \frac{H}{2}\|h\|h \; - \; \mu \; = \; 0, \\[5pt]
&h \; = \; \hat{\mB}x,
\end{cases}
\end{equation}
where $\mu \in \R^{m}$ is a vector of slack variables. From the second and the third equations we get: 
$$
\mu \; = \; b_2 \; + \; \mD\hat{B}x  \; + \; \frac{H}{2}\|\hat{\mB}x\| \hat{\mB}x,
$$
plugging of which into the first one gives:
$$
\underbrace{\biggl( m \lambda \hat{\mA} \; + \; \hat{\mB}^T\Bigl(  \mD + \frac{H}{2}\|\hat{\mB}x\| \Bigr)\hat{\mB}  \biggr)}_{\equiv \; \mZ(\|\hat{\mB}x\|)} \,  x \; = \;  - \,\underbrace{(m \lambda b_1 + \hat{\mB}^T b_2)}_{\equiv \;b}.
$$
Thus, if we have a solution $\tau^{*} \geq 0$ of the one-dimensional equation: 
$$\tau^{*} \; = \; \| \hat{\mB} ( \mZ(\tau^{*}) )^{\dagger} b \|,$$
then we can set
$$
x^{*} \; := \; -(\mZ(\tau^{*}))^{\dagger} b,  \qquad h^{*} \; := \; \hat{\mB} x^{*}, \qquad \mu^{*} \; := \; b_2 + \mD\hat{\mB}x^{*} + \frac{H}{2}\|\hat{\mB}x^{*}\| \hat{\mB}x^{*}.
$$
It is easy to check that $(x^{*}, h^{*}, \mu^{*})$ are solutions of~\eqref{APPENDIXKKTSystem} and therefore of~\eqref{APPENDIXKKTSubproblem} as well.

\section{Lipschitz Constant of the Hessian of Logistic Loss}

\begin{proposition} \label{prop:logistic}
	Loss function for logistic regression $\phi(t) := \log(1 + \exp(t))$ has Lipschitz-continuous Hessian with constant $\H_{\phi} = 1 / (6 \sqrt{3})$. Thus, it holds, for all $t, s \in \R$:
	\begin{equation} \label{APPENDIXLogisticRegression}
	| \phi^{\prime \prime}(t) - \phi^{\prime \prime}(s) | \quad \leq \quad \H_{\phi}|t - s|.
	\end{equation}
\end{proposition}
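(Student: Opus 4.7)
The plan is to reduce the Lipschitz bound on $\phi''$ to a uniform bound on $|\phi'''|$ via the mean value theorem, and then carry out a one-variable optimization to pin down the constant $1/(6\sqrt{3})$.

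First I would compute the derivatives explicitly. Writing $\sigma(t) \defeq \frac{1}{1+e^{-t}} = \frac{e^t}{1+e^t}$ for the logistic sigmoid, a direct differentiation gives
\begin{equation*}
\phi'(t) = \sigma(t), \qquad \phi''(t) = \sigma(t)\bigl(1-\sigma(t)\bigr), \qquad \phi'''(t) = \sigma(t)\bigl(1-\sigma(t)\bigr)\bigl(1-2\sigma(t)\bigr).
\end{equation*}
Since $\phi$ is three times continuously differentiable on $\R$, the mean value theorem (applied to $\phi''$) yields $|\phi''(t)-\phi''(s)| \leq \bigl(\sup_{\xi\in\R}|\phi'''(\xi)|\bigr)\cdot |t-s|$, so it suffices to show $\sup_{\xi}|\phi'''(\xi)| = 1/(6\sqrt{3})$.

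Next I would perform the substitution $u = \sigma(t)$, which is a bijection $\R \to (0,1)$. The problem then becomes: maximize $q(u) \defeq u(1-u)|1-2u|$ over $u\in(0,1)$. Observe that the map $u \mapsto 1-u$ leaves $q$ invariant, so I can reparameterize by $v \defeq u - 1/2 \in (-1/2,1/2)$. A short calculation gives
\begin{equation*}
u(1-u)(1-2u) = \bigl(\tfrac14 - v^2\bigr)(-2v) = 2v^3 - \tfrac{v}{2}.
\end{equation*}
Setting the derivative $6v^2 - 1/2$ to zero yields critical points $v = \pm 1/(2\sqrt{3})$, and evaluating $|2v^3 - v/2|$ at either of these points gives exactly $1/(6\sqrt{3})$. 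Since the expression vanishes at $v=0$ and at $v=\pm 1/2$ (the endpoints), this critical value is indeed the global maximum on the interval.

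Combining the two steps, $\sup_\xi |\phi'''(\xi)| = 1/(6\sqrt{3}) =: \H_\phi$, and the mean value theorem delivers \eqref{APPENDIXLogisticRegression}. I do not expect any genuine obstacle: the only mildly delicate point is confirming that the optimization over $u\in(0,1)$ really attains its supremum in the interior (not as a limit), which is immediate from the closed-form expression after the $v$-substitution.
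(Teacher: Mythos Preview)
Your proposal is correct and follows essentially the same approach as the paper: reduce \eqref{APPENDIXLogisticRegression} to the uniform bound $|\phi'''|\le \H_\phi$ via the mean value theorem, express $\phi'''$ as a polynomial in the sigmoid, and optimize. The only cosmetic difference is that the paper finds the extremum by computing $\phi^{(4)}$ and solving a quadratic in $\phi'(t^*)$ (obtaining $\phi'(t^*)=\tfrac12\pm\tfrac{1}{\sqrt{12}}$), whereas you substitute $v=\sigma(t)-\tfrac12$ and optimize the cubic $2v^3-v/2$ directly; both routes land on $v=\pm 1/(2\sqrt{3})$ and the same constant.
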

\begin{proof}
	\medskip
	
	\noindent
	To prove~\eqref{APPENDIXLogisticRegression} it is enough to show: $| \phi^{\prime \prime \prime}(t) | \leq \H_{\phi},$ for all $t \in \R$. Direct calculations give:
	$$
	\phi^{\prime}(t) \; = \; \frac{1}{1 + \exp(-t)}, \qquad \phi^{\prime \prime}(t) \; = \; \phi^{\prime}(t) \cdot (1 - \phi^{\prime}(t)), \qquad \phi^{\prime\prime\prime}(t) \; = \; \phi^{\prime\prime}(t)\cdot (1 - 2 \phi^{\prime}(t)).
	$$
	
	\noindent
	Let us find extreme values of the function $g(t) := \phi^{\prime\prime\prime}(t)$ for which we have $\displaystyle \lim_{t \to -\infty} g(t) \, = \lim_{t \to +\infty} g(t) = 0$.
	
	\noindent
	Stationary points of $g(t)$ are solutions of the equation
	$$
	g^{\prime}(t^{*}) \; = \; \phi^{(4)}(t^{*}) \; = \;\phi^{\prime\prime}(t^{*}) \cdot \bigl[ (1 - 2\phi^{\prime}(t^{*}))^2 - 2\phi^{\prime}(t^{*}) \cdot (1 - \phi^{\prime}(t^{*}) )  \bigr] \; = \; 0
	$$
	which consequently should satisfy $\phi^{\prime}(t^{*}) = \frac{1}{2} \pm \frac{1}{\sqrt{12}}$ and therefore:
	$$
	g(t^{*}) \; = \; \phi^{\prime \prime \prime}(t^{*}) \; = \; \biggl( \frac{1}{2} + \frac{1}{\sqrt{12}}\biggr) \cdot \biggl( \frac{1}{2} - \frac{1}{\sqrt{12}}\biggr) \cdot \biggl( \pm \frac{1}{\sqrt{3}}\biggr) \; = \; \pm \frac{1}{6 \sqrt{3}},
	$$
	from what we get: $| \phi^{\prime \prime \prime}(t) | \leq 1 / (6 \sqrt{3})$.
\end{proof}

\end{document}